\documentclass{amsart}
\pdfoutput=1
\usepackage{amssymb}
\usepackage{url}
\usepackage{enumerate}
\usepackage{thmtools}
\usepackage[numbers]{natbib}
\usepackage{hyperref} 

\hypersetup{
  pdftitle={Monotone Hurwitz numbers in genus zero},
  pdfauthor={I. P. Goulden, Mathieu Guay-Paquet, and Jonathan Novak},
}

\urlstyle{sf}
\declaretheorem[numberwithin=section]{theorem}

\declaretheorem[numberlike=theorem]{corollary}

\declaretheorem[numberlike=theorem, style=definition]{definition}

\declaretheorem[numberwithin=section, style=remark]{remark}
\numberwithin{equation}{section}

\author{I.~P.~Goulden}
\address{Department of Combinatorics \& Optimization\\University of Waterloo\\Canada}
\email{ipgoulden@uwaterloo.ca}

\author{Mathieu Guay-Paquet}
\address{Department of Combinatorics \& Optimization\\University of Waterloo\\Canada}
\email{mguaypaq@uwaterloo.ca}
\thanks{IPG and MGP were supported by NSERC}

\author{Jonathan Novak}
\address{Department of Mathematics. Massachusetts Institute of Technology, USA}
\email{jnovak@math.mit.edu}

\title{Monotone Hurwitz numbers in genus zero}
\keywords{Hurwitz numbers, matrix models, enumerative geometry}
\subjclass{Primary 05A15, 14E20; Secondary 15B52}

\date{\today}

\newcommand{\QQ}{\mathbb{Q}}

\newcommand{\id}{\mathrm{id}}

\newcommand{\pp}{\mathbf{p}}
\newcommand{\qq}{\mathbf{q}}
\newcommand{\xx}{\mathbf{x}}
\newcommand{\yy}{\mathbf{y}}

\newcommand{\hur}{H}
\newcommand{\Hur}{\mathbf{H}}
\newcommand{\mon}{\vec{H}}
\newcommand{\Mon}{\vec{\mathbf{H}}}
\newcommand{\C}{C}
\newcommand{\guess}{\mathbf{F}}

\newcommand{\D}{\mathrm{D}}
\newcommand{\E}{\mathrm{E}}
\newcommand{\DD}{\mathcal{D}}
\newcommand{\EE}{\mathcal{E}}

\newcommand{\abs}[1]{\left|{#1}\right|}
\newcommand{\diff}[2][]{\frac{\partial{#1}}{\partial{#2}}}
\newcommand{\sdiff}[3][]{\frac{\partial^2{#1}}{\partial{#2}\partial{#3}}}

\DeclareMathOperator{\ch}{ch}
\DeclareMathOperator{\Aut}{Aut}
\DeclareMathOperator*{\Split}{Split}
\DeclareMathOperator{\trace}{tr}

\begin{document}

\begin{abstract}
  Hurwitz numbers count branched covers of the Riemann sphere with specified ramification data, or equivalently, transitive permutation factorizations in the symmetric group with specified cycle types. Monotone Hurwitz numbers count a restricted subset of the branched covers counted by the Hurwitz numbers, and have arisen in recent work on the the asymptotic expansion of the Harish-Chandra-Itzykson-Zuber integral. In this paper we begin a detailed study of monotone Hurwitz numbers. We prove two results that are reminiscent of those for classical Hurwitz numbers. The first is the monotone join-cut equation, a partial differential equation with initial conditions that characterizes the generating function for monotone Hurwitz numbers in arbitrary genus. The second is our main result, in which we give an explicit formula for monotone Hurwitz numbers in genus zero.
\end{abstract}

\maketitle

\setcounter{tocdepth}{2}
\tableofcontents

\section{Introduction}

Hurwitz numbers count branched covers of the Riemann sphere with specified ramification data. They have been the subject of much mathematical interest in recent years, especially through the ELSV formula, given in~\cite{ekedahl-lando-shapiro-vainshtein01}, which expresses a Hurwitz number as a Hodge integral over the moduli space of stable curves of a given genus with a given number of marked points. This has led to a number of new proofs (see, \textit{e.g.}, Okounkov and Pandharipande~\cite{okounkov-pandharipande09} and Kazarian and Lando~\cite{kazarian-lando07}) of Witten's conjecture~\cite{witten91} (first proved by Kontsevich~\cite{kontsevich92}), which states that a particular generating function for intersection numbers satisfies the KdV hierarchy of partial differential equations. The interest in Hurwitz numbers has much to do with these rich connections that their study has revealed between mathematical physics and algebraic geometry. There is also a connection with algebraic combinatorics because of the bijection, due to Hurwitz~\cite{hurwitz91}, between branched covers of the sphere and transitive factorizations in the symmetric group (see, \textit{e.g.}, Goulden, Jackson and Vainshtein~\cite{goulden-jackson-vainshtein00}).

Monotone Hurwitz numbers, introduced in~\cite{goulden-guay-paquet-novak11a}, count a restricted subset of the branched covers counted by the Hurwitz numbers. The topic of~\cite{goulden-guay-paquet-novak11a} is the Harish-Chandra-Itzykson-Zuber (HCIZ) integral (see, \textit{e.g.}, \cite{harish-chandra57},~\cite{itzykson-zuber80},~\cite{zinn-justin-zuber03})
\[
  I_N(z, A_N, B_N) = \int\limits_{\mathbf{U}(N)} e^{zN\trace(A_N U B_N U^*)} \mathrm{d}U.
\]
Here the integral is over the group of $N \times N$ complex unitary matrices against the normalized Haar measure, $z$ is a complex parameter, and $A_N,B_N$ are $N\times N$ complex matrices. Since $\mathbf{U(N)}$ is compact, $I_N$ is an entire function of $z \in \mathbb{C}$. Consequently, the function
\[
  F_N(z, A_N, B_N) = N^{-2} \oint\limits_0^z \frac{I_N'(\zeta, A_N, B_N)}{I_N(\zeta, A_N, B_N)} \mathrm{d}\zeta
\]
is well-defined and holomorphic in a neigbourhood of $z = 0$, and satisfies
\[
	I_N(z, A_N, B_N) = e^{N^2 F_N(z, A_N, B_N)}
\]
on its domain of definition. In~\cite{goulden-guay-paquet-novak11a}, we proved that, for two specified sequences of normal matrices $A=(A_N)_{N=1}^{\infty}$, $B=(B_N)_{N=1}^{\infty}$ which grow in a sufficiently regular fashion, the derivatives $F_N^{(d)}(0, A_N, B_N)$ of $F_N$ at $z = 0$ admit an $N \rightarrow \infty$ asymptotic expansion on the scale $N^{-2}$ whose $g$th coefficient is a generating function for the monotone double Hurwitz numbers of degree $d$ and genus $g$. This is analogous to the well-known genus expansion of Hermitian matrix models, whose coefficients are generating functions enumerating graphs on surfaces (see \textit{e.g.}~\cite{zvonkin97}). In this paper, we begin a detailed study of monotone Hurwitz numbers.

\subsection{Hurwitz numbers}

The \emph{single Hurwitz numbers} count $d$-sheeted branched covers of the Riemann sphere by a Riemann surface where we allow arbitrary, but fixed, branching at one ramification point and only simple branching at other ramification points. Using the Hurwitz~\cite{hurwitz91} encoding of a branched cover as a factorization in the symmetric group, we obtain the following identification with an enumeration question in the symmetric group: given a partition $\alpha \vdash d$ and an integer $r \geq 0$, the single Hurwitz number $\hur^r(\alpha)$ is the number of factorizations
\begin{equation}\label{eq:single}
  (a_1 \, b_1) (a_2 \, b_2) \cdots (a_r \, b_r) = \sigma
\end{equation}
in the symmetric group $S_d$, where
\begin{itemize}
  \item
    $(a_1 \, b_1), (a_2 \, b_2), \ldots, (a_r \, b_r)$ are transpositions,
  \item
    $\sigma$ is in the conjugacy class $\C_\alpha$ of permutations with cycle type $\alpha$, and
  \item
    the subgroup $\langle (a_1 \, b_1), (a_2 \, b_2), \ldots, (a_r \, b_r) \rangle$ acts transitively on the ground set $\{1, 2, \ldots, d\}$.
\end{itemize}
Each factorization corresponds to a branched cover of the Riemann sphere, and by the Riemann-Hurwitz formula, the genus $g$ of the cover is given by the relation
\begin{equation}\label{eq:riemann-hurwitz}
 r = d + \ell(\alpha) + 2g - 2,
\end{equation}
where $\ell(\alpha)$ denotes the number of parts of $\alpha$. Depending on the context, we will write $\hur_g(\alpha)$ interchangeably with $\hur^r(\alpha)$, using the convention that \eqref{eq:riemann-hurwitz} always holds.

\begin{remark}
Equation~\eqref{eq:single}, when rewritten as $(a_1 \, b_1) (a_2 \, b_2) \cdots (a_r \, b_r) \sigma^{-1} = \mathrm{id}$, translates into a \emph{monodromy} condition for the corresponding cover, in which $\sigma^{-1}$ specifies the branching for the point with arbitrary ramification, and $(a_i \, b_i)$, $i = 1, \ldots, r$ specifies the (simple) branching at the remaining ramification points. The transitivity condition for the factorization translates to the requirement that the corresponding cover is \emph{connected}.
\end{remark}

\subsection{Monotone Hurwitz numbers}

The \emph{monotone single Hurwitz number} $\mon^r(\alpha)$ is the number of factorizations~\eqref{eq:single} counted by the single Hurwitz number~$\hur^r(\alpha)$, but with the additional restriction that
\begin{equation}\label{eq:monotone}
  b_1 \leq b_2 \leq \cdots \leq b_r,
\end{equation}
where $a_i < b_i$ by convention. As with Hurwitz numbers, depending on the context, we will write $\mon_g(\alpha)$ interchangeably with $\mon^r(\alpha)$, with the understanding that \eqref{eq:riemann-hurwitz} holds. We will refer to a factorization~\eqref{eq:single} with restriction~\eqref{eq:monotone} as a \emph{monotone} factorization of $\sigma$.

\begin{remark}
In Hurwitz's encoding, the ground set $\{1, 2, \ldots, d\}$ corresponds to the set of sheets of the branched cover, once branch cuts have been chosen and the sheets have been labelled. In the case of Hurwitz numbers, the labelling of the sheets is immaterial, so Hurwitz numbers are usually defined to count branched covers with \emph{unlabelled} sheets, which differs from our definition above by a factor of $d\,!$. However, for monotone Hurwitz numbers, the monotonicity condition depends on a total ordering of the sheets, so the labelling does matter in this case. Thus, for consistency, our convention is that both kinds of Hurwitz numbers count branched covers with labelled sheets.
\end{remark}

\subsection{Main result}

In \autoref{sec:solution}, we obtain the following theorem. This is our main result, and gives an explicit formula for the genus zero monotone Hurwitz numbers.

\begin{theorem}\label{thm:gzformula}
  The genus zero monotone single Hurwitz number $\mon_0(\alpha)$, $\alpha \vdash d$ is given by
  \begin{equation}\label{eq:mon0}
    \mon_0(\alpha) = \frac{d\,!}{\abs{\Aut \alpha}} (2d + 1)^{\overline{\ell(\alpha)-3}} \; \prod_{j=1}^{\ell(\alpha)} \binom{2\alpha_j}{\alpha_j},
  \end{equation}
  where
  \[
    (2d + 1)^{\overline{k}} = (2d + 1) (2d + 2) \cdots (2d + k)
  \]
  denotes a rising product with $k$ factors, and by convention
  \[
    (2d + 1)^{\overline{k}} = \frac{1}{(2d + k + 1)^{\overline{-k}}}
  \]
  for $k<0$.
\end{theorem}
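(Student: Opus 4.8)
The plan is to derive \eqref{eq:mon0} from the genus-zero specialization of the monotone join-cut equation (proved earlier in the paper as the characterizing PDE for the monotone Hurwitz generating function). First I would pass from the numbers $\mon_0(\alpha)$ indexed by partitions to the genus-zero free energy, a power series $\guess(\pp)$ in infinitely many variables $p_1, p_2, \dots$, where the coefficient of $\prod_j p_{\alpha_j}$ records $\mon_0(\alpha)$ up to the standard normalization by $|\Aut\alpha|$ and $d!$. The claimed formula is a product over the parts $\alpha_j$ together with the ``global'' factor $(2d+1)^{\overline{\ell(\alpha)-3}}$ depending only on $d=|\alpha|$ and $\ell=\ell(\alpha)$; the presence of this rising-factorial factor strongly suggests introducing a single ``master'' series in one variable that packages the part-weights $\binom{2\alpha_j}{\alpha_j}$, and then assembling $\guess$ from it. Concretely, I would guess a generating function of the form $\guess(\pp) = \sum_{\ell\ge 1}\frac{1}{\ell!}(\text{something involving }(2d+1)^{\overline{\ell-3}})$ built from the one-variable series $\phi(x) = \sum_{k\ge 1}\binom{2k}{k}\frac{p_k x^k}{\cdots}$, and verify it satisfies the genus-zero join-cut equation with the correct initial condition.

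The key computational device I expect to use is a change of variables that linearizes the quadratic PDE. The genus-zero join-cut equation has the shape (schematically) of an inviscid-Burgers / dispersionless-type equation, and experience with Hurwitz-type problems says the right move is to introduce a new variable --- call it $y$ --- defined implicitly by a functional equation $y = y(x, \pp)$ of Lagrange-inversion type, in terms of which $\guess$ becomes explicit. So the steps would be: (1) write out the genus-zero join-cut equation explicitly for $\guess(\pp)$; (2) propose the implicit substitution and the conjectured closed form of $\guess$ in the new variable; (3) compute the relevant partial derivatives via implicit differentiation and check the PDE is satisfied identically; (4) check the initial condition (the ``one transposition / string equation'' normalization, i.e. the $p_1$- and $p_2$-level data) to pin down constants; (5) extract the coefficient of $\prod_j p_{\alpha_j}$ by Lagrange inversion, which produces exactly the product $\prod_j\binom{2\alpha_j}{\alpha_j}$ and the rising factorial $(2d+1)^{\overline{\ell-3}}$, and hence \eqref{eq:mon0}.

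The main obstacle, I expect, is step (2)–(3): correctly guessing the implicit substitution and the closed form of the genus-zero free energy, and then showing the PDE holds after the change of variables. The bookkeeping in passing between the ``disconnected'' (exponential) generating function and the ``connected'' one, and keeping track of the Euler operator $\sum_i i p_i \partial_{p_i}$ (which records the degree $d$ and produces the $2d+1$ shift), is error-prone; the unusual negative-index convention for $(2d+1)^{\overline{k}}$ when $\ell(\alpha) \in \{1,2\}$ means the small cases $\ell=1$ and $\ell=2$ must be handled separately and matched against the initial conditions. A secondary obstacle is the Lagrange-inversion extraction in step (5): one must identify the kernel of the functional equation so that the single-variable coefficient extraction yields the central binomial coefficients cleanly, and confirm the symmetry factor $d!/|\Aut\alpha|$ emerges correctly from the multinomial combinatorics of assembling $\ell$ parts.
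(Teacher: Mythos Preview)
Your plan is essentially the paper's approach: package the conjectured numbers into a generating series $\guess(\pp)$, introduce a Lagrange--inversion change of variables, obtain a closed form for (a derivative of) $\guess$ in the new variables, and verify the genus-zero monotone join--cut equation that uniquely characterizes $\Mon_0$. Two refinements to anticipate: the paper's substitution is genuinely \emph{multivariate}, $q_j = p_j\bigl(1-\sum_k \binom{2k}{k}q_k\bigr)^{-2j}$, rather than a single implicit $y=y(x,\pp)$; and the rising factorial $(2d+1)^{\overline{\ell-3}}$ is dealt with not by a final coefficient extraction but by first applying $(2\DD-2)(2\DD-1)(2\DD)$ to $\guess$ (your ``Euler operator producing the $2d+1$ shift'') to get a clean closed form, and then inverting those three operators one at a time in the $\qq$-variables.
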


In the special case that $\alpha = (d)$, the partition with a single part equal to $d$, \autoref{thm:gzformula} becomes
\[
  \mon_0((d)) = \frac{(2d-2)!}{d\,!} = (d-1)! \, C_{d-1},
\]
where $C_{d-1} = \frac{1}{d}\binom{2d-2}{d-1}$ is a \emph{Catalan} number. This case was previously obtained by Gewurz and Merola~\cite{gewurz-merola06}, who used the term \emph{primitive} for these factorizations.
In the special case that $\alpha = (1^d)$, the partition with all parts equal to 1, \autoref{thm:gzformula} becomes
\[
  \mon_0((1^d)) = (d-1)! \, 2^{d-1}.
\]
Via the connection between monotone Hurwitz numbers and the HCIZ integral established in~\cite{goulden-guay-paquet-novak11a}, this case is equivalent to a result previously obtained by Zinn-Justin~\cite{zinn-justin02} for the HCIZ integral.

\autoref{thm:gzformula} is strikingly similar to the well-known explicit formula for the genus zero Hurwitz numbers
\begin{equation}\label{eq:hur0}
  \hur_0(\alpha) = \frac{d\,!}{\abs{\Aut \alpha}} (d+\ell(\alpha)-2)! \, d^{\,\ell(\alpha)-3} \; \prod_{j=1}^{\ell(\alpha)} \frac{\alpha_j^{\alpha_j}}{\alpha_j!},
\end{equation}
published without proof by Hurwitz~\cite{hurwitz91} in 1891 (see also Strehl~\cite{strehl96}) and independently rediscovered and proved a century later by Goulden and Jackson~\cite{goulden-jackson97}.

There is another case in which an explicit formula similar to \autoref{thm:gzformula} is known. This is the case where we allow arbitrary, but fixed, branching at a specified ramification point and arbitrary branching at all other ramification points, which has been studied by Bousquet-M\'elou and Schaeffer~\cite{bousquet-melou-schaeffer00}. Given a partition $\alpha \vdash d$ and integers $r,g \geq 0$, let $G_{\!g}^r(\alpha)$ be the number of factorizations
\begin{equation}\label{eq:arbitrary}
  \rho_1 \rho_2 \cdots \rho_r = \sigma
\end{equation}
in the symmetric group $S_d$ which satisfy the conditions
\begin{itemize}
  \item
    $\rho_1, \rho_2, \ldots, \rho_r \in S_d$,
  \item
    $\sigma \in \C_\alpha$,
  \item
    $\langle \rho_1, \rho_2, \ldots, \rho_r \rangle$ acts transitively on $\{1, 2, \ldots, d\}$,
\end{itemize}
and
\begin{equation}\label{eq:riemann-hurwitzBMS}
  \sum_{j=1}^r \operatorname{rank}(\rho_j) = d + \ell(\alpha) + 2g - 2,
\end{equation}
where $\operatorname{rank}(\rho_j)$ is $d$ minus the number of cycles of $\rho_j$. Each such factorization corresponds to a branched cover of the Riemann sphere, and~\eqref{eq:riemann-hurwitzBMS}, by the Riemann-Hurwitz formula, specifies the genus $g$ of the cover.

\begin{remark}
Note that in this case there is more freedom for the parameters $\alpha$, $r$, $g$ than for the Hurwitz and monotone Hurwitz cases above. In particular, given $\alpha$ and $r$, the choice for $g$ is \emph{not} unique in~\eqref{eq:riemann-hurwitzBMS} above. This explains why we have used both parameters $r$ and $g$ in the notation $G_{\!g}^r(\alpha)$.
\end{remark}

Bousquet-M\'elou and Schaeffer~\cite{bousquet-melou-schaeffer00} solved this problem in full generality for genus zero, using a bijective correspondence to constellations and thence to a family of bicoloured trees. (For more on \emph{constellations}, see Lando and Zvonkin~\cite{lando-zvonkin04}.) They proved that
\begin{equation}\label{eq:BMS0}
  G_{\!0}^r(\alpha) = \frac{d\,!}{\abs{\Aut \alpha}} \, r \, ((r-1)d-\ell(\alpha)+2)^{\overline{\ell(\alpha)-2}} \; \prod_{j=1}^{\ell(\alpha)} \binom{r\alpha_j-1}{\alpha_j},
\end{equation}
an explicit form that is again strikingly similar to both~\eqref{eq:mon0} and~\eqref{eq:hur0}.

The explicit formulas~\eqref{eq:mon0},~\eqref{eq:hur0} and~\eqref{eq:BMS0} feature remarkably simple combinatorial functions, but we know of no uniform bijective method to explain these formulas.

\subsection{Join-cut equations}

The proof that we give for \autoref{thm:gzformula}, our main result, involves the generating function for monotone single Hurwitz numbers
\begin{equation}\label{eq:defMon}
  \Mon(z, t, \pp) = \sum_{d\geq 1} \frac{z^d}{d\,!} \sum_{r\geq 0} t^r \sum_{\alpha \vdash d} \mon^r(\alpha) p_\alpha,
\end{equation}
which is a formal power series in the indeterminates $z, t$ and the countable set of indeterminates $\pp = (p_1, p_2, \ldots)$, and where $p_\alpha$ denotes the product $\prod_{j=1}^{\ell(\alpha)} p_{\alpha_j}$. From a combinatorial point of view,
\begin{itemize}
  \item
    $z$ is an exponential marker for the size $d$ of the ground set,
  \item
    $t$ is an ordinary marker for the number $r$ of transpositions, and
  \item
    $p_1, p_2, \ldots$ are ordinary markers for the cycle lengths of $\sigma$.
\end{itemize}
In \autoref{sec:joincut}, we obtain the following theorem, which gives a partial differential equation with initial condition that uniquely specifies the generating function $\Mon$. The proof that we give is a combinatorial join-cut analysis, and we refer to the partial differential equation in \autoref{thm:joincut} as the \emph{monotone join-cut equation}.

\begin{restatable}{theorem}{restatejoincut}\label{thm:joincut}
  The generating function $\Mon$ is the unique formal power series solution of the partial differential equation
  \[
    \frac{1}{2t}\left( z\diff[\Mon]{z} - z p_1 \right) = \frac{1}{2} \sum_{i,j \geq 1} \left( (i+j)p_i p_j \diff[\Mon]{p_{i+j}} + ij p_{i+j} \sdiff[\Mon]{p_i}{p_j} + ij p_{i+j} \diff[\Mon]{p_i} \diff[\Mon]{p_j} \right)
  \]
  with the initial condition $[z^0] \Mon = 0$.
\end{restatable}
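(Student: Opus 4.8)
The plan is to derive the monotone join-cut equation by a direct combinatorial analysis of the last transposition in a monotone factorization, and then to establish uniqueness by a degree-by-degree induction. For the derivation, I would fix a partition $\alpha \vdash d$ with $r \geq 1$ and look at a monotone factorization $(a_1\,b_1)\cdots(a_r\,b_r) = \sigma$. The key observation is that because $b_1 \leq \cdots \leq b_r$, the final transposition $(a_r\,b_r)$ is the one whose larger entry $b_r$ is maximal; in particular, the element $b_r$ is the largest point moved by any of the transpositions, and a standard argument shows that $b_r$ must be the largest element of its cycle in $\sigma$. Removing $(a_r\,b_r)$ from the right, i.e. considering $\sigma' = \sigma (a_r\,b_r)$, gives a shorter monotone factorization (the monotonicity of the remaining $b_i$ is automatic), and $\sigma'$ is obtained from $\sigma$ by multiplying on the right by a transposition. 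This multiplication either \emph{joins} two cycles of $\sigma'$ into one cycle of $\sigma$ (when $a_r, b_r$ lie in different cycles of $\sigma'$) or \emph{cuts} one cycle of $\sigma'$ into two cycles of $\sigma$ (when they lie in the same cycle).

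Next I would translate this dichotomy into the language of generating functions. The left-hand side $\frac{1}{2t}(z\,\partial\Mon/\partial z - zp_1)$ should be interpreted as follows: $z\,\partial\Mon/\partial z$ extracts the factor $d$ from the exponential marker, the subtraction of $zp_1$ removes the $d=1$, $r=0$ term (the only factorization with no transpositions, namely $\id = \id$ in $S_1$), and division by $t$ accounts for the removal of one transposition. On the right-hand side, the first sum $\sum (i+j)p_ip_j\,\partial\Mon/\partial p_{i+j}$ corresponds to the \emph{cut} case: a cycle of length $i+j$ in $\sigma$ arises from a cut of a cycle of that length in $\sigma'$ into cycles of lengths $i$ and $j$, and the combinatorial factor $(i+j)$ (halved by the global $\frac12$, since the ordered pair $(i,j)$ is counted twice) records the number of ways to choose the cut. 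The remaining two terms, $ij\,p_{i+j}\,\partial^2\Mon/\partial p_i\partial p_j$ and $ij\,p_{i+j}\,(\partial\Mon/\partial p_i)(\partial\Mon/\partial p_j)$, correspond to the \emph{join} case, split according to whether the two cycles of $\sigma'$ that get joined lie in the same connected component of the transposition factorization or in two different components; the product of two derivatives is exactly the generating-function avatar of merging two independent transitive factorizations into one. I would verify that the connectivity bookkeeping works out: in the join case one must be careful that after removing the last transposition the factorization may become disconnected into exactly two pieces, and the transitivity of the original forces $(a_r\,b_r)$ to bridge them.

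For the uniqueness statement, I would argue that the differential equation together with $[z^0]\Mon = 0$ determines all coefficients recursively. Writing $\Mon = \sum_{d \geq 1} z^d M_d(t,\pp)$, the operator $z\,\partial/\partial z$ acts on the degree-$d$ part as multiplication by $d$, while every term on the right-hand side is a polynomial expression in $M_1, \ldots, M_{d-1}$ and their $\pp$-derivatives (the operators $p_ip_j\,\partial/\partial p_{i+j}$, $p_{i+j}\,\partial^2/\partial p_i\partial p_j$ each preserve the total $\pp$-degree, which equals $d$, but the quadratic term is a product of two pieces of strictly smaller $z$-degree). Hence $d \cdot M_d$ is expressed entirely in terms of $M_1, \ldots, M_{d-1}$, so $M_d$ is uniquely determined once $M_1, \ldots, M_{d-1}$ are known, and the base case $M_1 = zp_1/z = p_1$ is forced by the $p_1$ term; formally one checks $[z^1]$ of the equation gives $\frac{1}{2t}(M_1 - p_1) = 0$. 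The main obstacle, and the part requiring the most care, is the combinatorial join-cut analysis itself — specifically, proving rigorously that in a monotone factorization the last transposition $(a_r\,b_r)$ interacts with $\sigma$ in exactly the controlled way described (that $b_r$ is a cycle maximum, that removing it preserves monotonicity, and that the connectivity splits into at most two components in the join case), and then matching the resulting weighted count term-by-term with the symmetrized coefficients on the right-hand side, including the correct treatment of the $\abs{\Aut\alpha}$ and $d!$ normalizations implicit in the $p_\alpha$ and $z^d/d!$ conventions.
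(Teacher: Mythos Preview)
Your overall strategy coincides with the paper's: both derive the equation from a combinatorial join-cut analysis of the last transposition in a transitive monotone factorization (the paper packages this as a recurrence for $M^r(\alpha)$ in its Theorem~2.1) and then translate the recurrence into the PDE by summing against the weight $\abs{\C_\alpha} z^{d+k} t^r p_\alpha p_k / (2\,d\,!)$. Your identification of the three cases with the three differential terms is correct in spirit, although your description of the cut case is slightly garbled---the $(i+j)$-cycle is a cycle of $\sigma'$, not of $\sigma$.

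The genuine gap is in your uniqueness argument. You assert that at $z$-degree $d$ ``every term on the right-hand side is a polynomial expression in $M_1, \ldots, M_{d-1}$,'' but this is false: the two linear operators $(i+j)p_ip_j\,\partial/\partial p_{i+j}$ and $ij\,p_{i+j}\,\partial^2/\partial p_i\partial p_j$ preserve the $z$-degree (as your own parenthetical acknowledges), so they contribute terms involving $M_d$ itself. Only the quadratic term splits into strictly smaller $z$-degrees. Hence the induction on $d$ does not close as written. The paper instead exploits the $1/t$ on the left: comparing coefficients of $z^d t^r$ on both sides, the left involves $[z^d t^{r+1}]\Mon$ while every term on the right involves only exponents $t^{r'}$ with $r' \le r$. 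Induction on $r$ (starting from the coefficient of $t^{-1}$, which forces $[z^d t^0]\Mon$) then determines all coefficients. Your $d$-induction can be repaired by noting that the degree-$d$ equation reads $(d - tL)M_d = (\text{terms in } M_1,\ldots,M_{d-1})$ with $L$ the linear join-cut operator, and that $d - tL$ is invertible in the $t$-adic topology for $d \ge 1$; but that extra step is missing from your argument.
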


\autoref{thm:joincut} is again strikingly similar to the situation for the classical single Hurwitz numbers. To make this precise, consider the generating function for the classical single Hurwitz numbers
\begin{equation}
  \Hur(z, t, \pp) = \sum_{d\geq 1}\frac{z^d}{d\,!} \sum_{r\geq 0} \frac{t^r}{r!} \sum_{\alpha \vdash d} \hur^r(\alpha) p_\alpha.
\end{equation}
It is well-known (see~\cite{goulden-jackson97,goulden-jackson-vainshtein00}) that $\Hur$ is the unique formal power series solution of the partial differential equation
\begin{equation}\label{eq:classicaljoincut}
  \diff[\Hur]{t} = \frac{1}{2} \sum_{i,j \geq 1} \left( (i+j)p_i p_j \diff[\Hur]{p_{i+j}} + ij p_{i+j} \sdiff[\Hur]{p_i}{p_j} + ij p_{i+j} \diff[\Hur]{p_i} \diff[\Hur]{p_j}\right)
\end{equation}
with the initial condition $[t^0] \Hur = z p_1$. Equation~\eqref{eq:classicaljoincut} is called the (classical) \emph{join-cut equation}, and has exactly the same differential forms on the right-hand side as the monotone join-cut equation given in \autoref{thm:joincut}. There are, however significant differences on the left-hand side between these two versions of the join-cut equation. In the classical case~\eqref{eq:classicaljoincut}, the left-hand side is a first derivative in $t$; in the monotone case~(\autoref{thm:joincut}), the left-hand side is a first \emph{divided difference} in $t$, and also involves differentiation in $z$.

\begin{remark}
  The difference in the left-hand sides between the two join-cut equations is related to the fact that the generating functions $\Hur$ and $\Mon$ differ in the combinatorial role played by the indeterminate $t$. In the case of $\Hur$, $t$ is an \emph{exponential} marker for the number $r$ of transpositions, while in the case of $\Mon$, $t$ is an \emph{ordinary} marker. This difference is for technical combinatorial reasons, explained in \autoref{sec:group-algebra}.
\end{remark}

Our proof of the explicit formula in \autoref{thm:gzformula} for genus zero monotone Hurwiz numbers proceeds by verification using a variant of the monotone join-cut equation given in \autoref{thm:joincut}. In general terms, this is how we obtained the explicit formula~\eqref{eq:mon0} for genus zero classical Hurwitz numbers in~\cite{goulden-jackson97}, using the classical join-cut equation~\eqref{eq:classicaljoincut}. However, the technical details of this verification are quite different in this paper because of the change in the left-hand side between these two different versions of the join-cut equation.

\subsection{The group algebra of the symmetric group}\label{sec:group-algebra}

In this section we express the generating functions $\Hur$ and $\Mon$ in terms of elements of the centre of the group algebra $\QQ[S_d]$. This is not an essential part of our proof of the main result, but will help explain why the indeterminate $t$ for transposition factors is an exponential marker in $\Hur$, whereas it is an ordinary marker in $\Mon$. In addition, it gives a convenient proof of the fact that the number of monotone factorizations of a permutation $\sigma$ depends only on its conjugacy class, which we will need in \autoref{sec:joincut} to prove the monotone join-cut equation.

First we consider the single Hurwitz numbers. For any $\alpha \vdash d$, let $\mathsf{C}_\alpha$ be the formal sum of all elements of the conjugacy class $\C_\alpha$, which consists of all permutations of cycle type $\alpha$, considered as an element of $\QQ[S_d]$. It is well-known that the centre of $\QQ[S_d]$ consists precisely of linear combinations of the $\mathsf{C}_\alpha$. If we drop the transitivity condition for single Hurwitz numbers, the generating function for the resulting not-necessarily-transitive factorizations becomes
\begin{align*}
  \tau(z, t, \pp)
    &= \sum_{d \geq 0} \frac{z^d}{d\,!} \sum_{r \geq 0} \frac{t^r}{r!} \sum_{\alpha \vdash d} p_\alpha \sum_{\sigma \in \C_\alpha} [\sigma] \mathsf{C}_{2,1^{d-2}}^r \\
    &= \sum_{d \geq 0} \frac{z^d}{d\,!} \sum_{r \geq 0} \frac{t^r}{r!} \sum_{\alpha \vdash d} p_\alpha \abs{\C_{\alpha}} [\mathsf{C}_{\alpha}] \mathsf{C}_{2,1^{d-2}}^r,
\end{align*}
where we have used the notation $[A]B$ for the \emph{coefficient} of $A$ in the expansion of $B$. The constant term 1 corresponding to $d = 0$ has been added to the generating function $\tau$ for combinatorial reasons, described as follows. When we drop the transitivity condition for single Hurwitz numbers, each resulting not-necessarily-transitive factorization can be split into disjoint transitive factorizations by restricting it to the orbits of the group $\langle (a_1 \, b_1), \ldots, (a_r \, b_r) \rangle$ on the ground set. Each of these orbits is a subset of the ground set $\{1,\ldots, d\}$, and the set of transpositions that act on pairs of elements in a given orbit is a subset of the positions $\{1, \ldots, r\}$ in the factorization. Conversely, transitive factorizations on disjoint ground sets can be combined by shuffling their transpositions in any way that preserves the order of transpositions acting on the same component of the ground set. Thus, each factorization counted by $\tau$ is an unordered collection of the transitive factorizations counted by $\Hur$, in which the variables $z$ (marking $d$) and $t$ (marking $r$) are both \emph{exponential}. From the Exponential Formula for exponential generating functions (see, \textit{e.g.},~\cite{goulden-jackson04}), this situation is captured by the equation
\[
  \Hur(z, t, \pp) = \log\tau(z, t, \pp).
\]

\begin{remark}
  The coefficient of $z^d t^r / r!$ in $\tau(z, t, \pp)$ is in fact the image of $\mathsf{C}_{2,1^{d-2}}^r$ under the characteristic map $\ch$ of Macdonald~\cite[p.~113]{macdonald95}, if one interprets the indeterminates $p_1, p_2, \dots$ as power sum symmetric functions. This can be expressed in the basis of Schur symmetric functions using irreducible characters of $S_d$, and then the tools of representation theory become available. While this is an interesting approach, we will not be using it here.
\end{remark}

Now we turn to monotone single Hurwitz numbers. While the monotonicity condition may seem artificial, it arises naturally in the group algebra $\QQ[S_d]$ via the Jucys-Murphy elements $\mathsf{J}_i$, defined by
\[
 \mathsf{J}_i = (1 \, i) + (2 \, i) + \cdots + (i-1 \, i), \qquad i = 1, \ldots, d.
\]
If we drop the transitivity condition for monotone single Hurwitz numbers, the generating function for the resulting not-necessarily-transitive factorizations becomes
\begin{align*}
  \vec{\tau}(z, t, \pp)
    &= \sum_{d \geq 0} \frac{z^d}{d\,!} \sum_{r \geq 0} t^r \sum_{\alpha \vdash d} p_\alpha \sum_{\sigma \in \C_\alpha}  [\sigma] \sum_{1 \leq b_1 \leq \cdots \leq b_r \leq d} \mathsf{J}_{b_1} \cdots \mathsf{J}_{b_r} \\
    &= \sum_{d \geq 0} \frac{z^d}{d\,!} \sum_{r \geq 0} t^r \sum_{\alpha \vdash d} p_\alpha \sum_{\sigma \in \C_\alpha}  [\sigma] h_r(\mathsf{J}_1, \ldots, \mathsf{J}_d),
\end{align*}
where $h_r$ is the $r$th complete symmetric polynomial. Jucys~\cite{jucys74} showed that the set of symmetric polynomials in the Jucys-Murphy elements is exactly the centre of $\QQ[S_d]$, so we obtain immediately that
\[
  \vec{\tau}(z, t, \pp) = \sum_{d \geq 0} \frac{z^d}{d\,!} \sum_{r \geq 0} t^r \sum_{\alpha \vdash d} p_\alpha \abs{\C_\alpha} [\mathsf{C}_\alpha] h_r(\mathsf{J}_1, \ldots, \mathsf{J}_d).
\]
This time, when we drop the transitivity condition for single monotone Hurwitz numbers, each resulting not-necessarily-transitive factorization can again be split into disjoint transitive factorizations by restricting it to the orbits of the group $\langle (a_1 \, b_1), \ldots, (a_r \, b_r) \rangle$ on the ground set. Each of these orbits is a subset of the ground set $\{1, \ldots, d\}$, and the set of transpositions that act on pairs of elements in a given orbit is a subset of the positions $\{1, \ldots, r\}$ in the factorization. However, this time, to preserve monotonicity, transitive factorizations on disjoint ground sets can be combined by shuffling their transpositions in only one way. Thus, each factorization counted by $\tau$ is an unordered collection of the transitive factorizations counted by $\Hur$, in which only the variable $z$ (marking $d$) is \emph{exponential}. The Exponential Formula for exponential generating functions then gives
\[
  \Mon(z, t, \pp) = \log\vec{\tau}(z, t, \pp).
\]

\begin{remark}
  As with $\tau(z, t, \pp)$, the coefficients of $\vec{\tau}(z, t, \pp)$ can be expressed in terms of the characteristic map $\ch$ of Macdonald~\cite{macdonald95} to provide a link with representation theory. This is particularly interesting in view of Okounkov and Veshik's approach to the representation theory of the symmetric group~\cite{okounkov-vershik96}, which features the Jucys-Murphy elements prominently. However, we will not be exploring this connection further in this paper.
\end{remark}

\begin{remark}
  Related results on complete symmetric functions of the Jucys-Murphy elements have been obtained by Lassalle~\cite{lassalle10} and F\'eray~\cite{feray11}. The recurrences they obtain seem to be of a completely different nature than those we obtain in this paper.
\end{remark}

\subsection{Outline of paper}

In \autoref{sec:joincut}, we prove the monotone join-cut equation of \autoref{thm:joincut}. This is based on a combinatorial join-cut analyis for monotone Hurwitz numbers that appears in \autoref{sec:recurrence}. The monotone join-cut equation itself is then deduced in \autoref{sec:jcequation}. The join-cut analysis also yields another system of equations that appears in \autoref{sec:topological}, and is referred to there as a topological recursion.

In \autoref{sec:intermed}, we recast the monotone join-cut equation, breaking it up into a separate join-cut equation for each genus, and expressing these equations in an algebraic form that is more convenient to solve. This is based on some algebraic operators that are introduced in \autoref{sec:liftprojsplit}, and the transformed system of equations appears in \autoref{sec:recasting}.

In \autoref{sec:solution}, we prove the main result of this paper, \autoref{thm:gzformula}, which gives an explicit formula for monotone Hurwitz numbers in genus zero. Our method is to repackage the formula as a generating function $\guess$, and to show that it satisfies the genus zero monotone join-cut equation which characterizes the generating function $\Mon_0$ for genus zero monotone Hurwitz numbers.
In \autoref{sec:lagrange}, we introduce transformed variables and use Lagrange's Implicit Function Theorem to give a closed form for a differential form applied to the series $\guess$. 
In \autoref{sec:invert}, we invert this differential form.
In \autoref{sec:genuszero}, we describe the action of the algebraic operators of \autoref{sec:intermed} on the transformed variables and deduce the main result.

\section{Join-cut analysis}\label{sec:joincut}

In this section, we analyze the effect of removing the last factor in a transitive monotone factorization, which leads to a recurrence relation for monotone single Hurwitz numbers. From this, we obtain the monotone join-cut equation of \autoref{thm:joincut}, which uniquely characterizes the generating function $\Mon(z, t, \pp)$, and a system of equations for a different generating function that we refer to as a \emph{topological recursion}.

\subsection{Recurrence relation}\label{sec:recurrence}

For a partition $\alpha \vdash d$, let $M^r(\alpha)$ be the number of transitive monotone factorizations of a fixed but arbitrary permutation $\sigma \in S_d$ of cycle type $\alpha$ into $r$ transpositions. By the discussion in \autoref{sec:group-algebra}, this number only depends on the cycle type of $\sigma$, so it is well-defined, and we immediately have
\begin{equation}\label{eq:defM}
  \mon^r(\alpha) = \abs{C_\alpha} M^r(\alpha).
\end{equation}

\begin{theorem}\label{thm:recur}
  The numbers $M^r(\alpha)$ are uniquely determined by the initial condition
  \[
    M^0(\alpha) = \begin{cases}
      1 &\text{if $\alpha = (1)$}, \\
      0 &\text{otherwise},
    \end{cases}
  \]
  and the recurrence
  \begin{multline}\label{eq:recurrence}
    M^{r+1}(\alpha \cup \{k\})
      = \sum_{k' \geq 1} k' m_{k'}(\alpha) M^r(\alpha \setminus \{k'\} \cup \{k + k'\}) \\
      {} + \sum_{k' = 1}^{k - 1} M^r(\alpha \cup \{k', k - k'\}) \\
      {} + \sum_{k' = 1}^{k - 1} \sum_{r' = 0}^r \sum_{\alpha' \subseteq \alpha} M^{r'}(\alpha' \cup \{k'\}) M^{r - r'}(\alpha \setminus \alpha' \cup \{k - k'\})
  \end{multline}
  for $\alpha \vdash d$, $d, r \geq 0$, $k \geq 1$. In this recurrence, $m_{k'}(\alpha)$ is the number of parts of $\alpha$ of size $k'$, and the last sum is over the $2^{\ell(\alpha)}$ subpartitions $\alpha'$ of $\alpha$.
\end{theorem}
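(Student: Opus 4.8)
The plan is to prove Theorem~\ref{thm:recur} by a direct combinatorial analysis of what happens when we delete the last transposition from a transitive monotone factorization. Fix a permutation $\sigma$ of cycle type $\alpha \cup \{k\}$ and consider a transitive monotone factorization
\[
  (a_1 \, b_1) \cdots (a_r \, b_r)(a_{r+1} \, b_{r+1}) = \sigma
\]
with $b_1 \le \cdots \le b_{r+1}$. Distinguish one cycle of $\sigma$ of length $k$ (the factor of the formula we are inducting on) and let $\tau = (a_{r+1}\, b_{r+1})$. Then $(a_1\,b_1)\cdots(a_r\,b_r) = \sigma\tau$, and the key observation is that $\sigma\tau$ differs from $\sigma$ by either a "join" (if $a_{r+1}, b_{r+1}$ lie in different cycles of $\sigma$, these two cycles merge) or a "cut" (if they lie in the same cycle, that cycle splits in two). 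The remaining factorization $(a_1\,b_1)\cdots(a_r\,b_r)$ is still monotone with $r$ factors, though no longer necessarily transitive, and one must bookkeep its cycle type and how the ground set splits into orbits.

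The main work is to organize the cases according to how the distinguished length-$k$ cycle of $\sigma$ is created from $\sigma\tau$ by multiplying by $\tau$, since this is what controls which term of the recurrence we land in. There are exactly three possibilities. First, the distinguished $k$-cycle of $\sigma$ could arise as a join of a $k'$-cycle (contributing a part of $\alpha$, for any $k' \ge 1$ with $m_{k'}(\alpha) \ge 1$, and there are $k'$ ways to choose how $\tau$ attaches) with the $(k-k')$... — more precisely, $\sigma\tau$ has a cycle of length $k+k'$ containing both the points of $\tau$, which after removal leaves the permutation $\sigma\tau$ of cycle type $\alpha \setminus\{k'\}\cup\{k+k'\}$; since the factorization of $\sigma\tau$ remains transitive, this gives the first sum, with the combinatorial factor $k' m_{k'}(\alpha)$ accounting for the choice of which $k'$-part and where $\tau$ acts. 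Second, the $k$-cycle could be cut off from a larger cycle — but reading it backwards, two cycles of $\sigma\tau$ of lengths $k'$ and $k-k'$ (both disjoint from the rest) are joined by $\tau$ into the $k$-cycle, and $\sigma\tau$ is \emph{still transitive on the whole ground set}, giving the second sum (over $k' = 1, \dots, k-1$, with the factor absorbed into the choice implicit in $M^r(\alpha\cup\{k',k-k'\})$ after accounting for symmetry). Third, $\tau$ performs a cut that \emph{disconnects} the factorization: $\sigma\tau$ splits the ground set into two parts, one carrying a $k'$-cycle together with a subpartition $\alpha' \subseteq \alpha$ and the other carrying a $(k-k')$-cycle together with $\alpha \setminus \alpha'$, and the first $r$ transpositions split accordingly into $r'$ acting on the first part and $r - r'$ on the second; here monotonicity is automatic once the two monotone sub-factorizations are fixed, because (as explained in \autoref{sec:group-algebra}) there is a \emph{unique} interleaving preserving the order within each part — this is precisely why no binomial coefficient $\binom{r}{r'}$ appears, in contrast to the classical join-cut recurrence. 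This accounts for the third, triple sum.

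Having enumerated the cases, I would then verify the bookkeeping: that the three cases are exhaustive and mutually exclusive (determined by whether $a_{r+1}, b_{r+1}$ are in the same cycle of $\sigma$, and if so whether the resulting split separates the $k$-cycle's support from the rest), and that in each case the number of resulting configurations is counted by the stated term, using \eqref{eq:defM} and the fact that $M^r$ depends only on cycle type to pass freely between "fixed $\sigma$" and "class" counts. The initial condition $M^0(\alpha)$ is immediate: the empty product equals $\sigma$ only when $\sigma = \id$, and transitivity on the ground set forces $d = 1$. Finally, the recurrence manifestly determines all $M^{r+1}(\beta)$ from values $M^{r'}(\gamma)$ with $r' \le r$ (and from $M^0$), so the numbers are uniquely determined; strictly one should note that every partition $\beta$ with at least one part can be written as $\alpha \cup \{k\}$, so the recurrence does reach every case.

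I expect the main obstacle to be the third case — the disconnecting cut — specifically getting the multiplicities exactly right. One must be careful that the sum over subpartitions $\alpha' \subseteq \alpha$ (all $2^{\ell(\alpha)}$ of them, treating equal parts as distinguishable by position) together with the sum over $k' = 1, \dots, k-1$ correctly and bijectively parametrizes the ways $\tau$'s two fixed points can sit in the $k$-cycle so as to cut it into a $k'$-arc and a $(k-k')$-arc, while the two monotone sub-factorizations reassemble uniquely. The subtle point distinguishing this from the classical analysis is the absence of the divided-binomial interleaving factor, and the proof must make explicit that monotonicity rigidifies the merge; I would state this as a small lemma (the unique-shuffle claim) leaning on the argument already given in \autoref{sec:group-algebra}. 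The first two cases are comparatively routine join/cut bookkeeping of the type familiar from \cite{goulden-jackson97}.
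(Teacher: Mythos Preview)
Your overall strategy matches the paper's: remove the last transposition $\tau=(a_{r+1}\,b_{r+1})$ from a transitive monotone factorization of a fixed $\sigma$ of cycle type $\alpha\cup\{k\}$, and sort the outcomes into cut / redundant join / essential join. The three cases you describe do correspond to the three terms of the recurrence, and your remark about the unique monotone shuffle (no $\binom{r}{r'}$ factor) is exactly right.

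However, there is a genuine gap. You ``distinguish one cycle of $\sigma$ of length $k$'' and then organize the analysis around ``how the distinguished length-$k$ cycle of $\sigma$ is created from $\sigma\tau$ by multiplying by $\tau$.'' But you never explain why $\tau$ must touch the distinguished $k$-cycle at all. A priori, $a_{r+1}$ and $b_{r+1}$ could both lie in cycles belonging to the $\alpha$ part of the cycle type, in which case the $k$-cycle is unchanged in $\sigma\tau$ and your trichotomy does not apply. Without this, the case split is not exhaustive and the counting of each case (e.g.\ the factor $k' m_{k'}(\alpha)$, which presupposes that $b_{r+1}$ is a fixed point of the $k$-cycle and only $a_{r+1}$ varies) is not justified.

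The paper closes this gap with a short but essential observation: choose $\sigma$ so that the largest element $d$ lies in the distinguished $k$-cycle. Transitivity forces $d$ to occur in some factor (otherwise $\{d\}$ is its own orbit), and monotonicity then forces $b_{r+1}=d$. Hence the last transposition always has one foot in the $k$-cycle, $a_{r+1}$ is the only free parameter, and the three cases you list are genuinely exhaustive. Conversely, appending any $(a_{r+1}\,d)$ to a length-$r$ monotone word preserves monotonicity because $d$ is maximal, which is what makes the construction bijective. Once you insert this step, the rest of your outline goes through exactly as in the paper.
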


\begin{proof}
  As long as the initial condition and the recurrence relation hold, uniqueness follows by induction on $r$. The initial condition follows from the fact that for $r = 0$ we must have $\sigma = \id$, and the identity permutation is only transitive in $S_1$.
  
  To show the recurrence, fix a permutation $\sigma \in S_d$ of cycle type $\alpha \cup \{k\}$, where the element $d$ is in a cycle of length $k$, and consider a transitive monotone factorization
  \begin{equation}\label{eq:fact1}
    (a_1 \, b_1) (a_2 \, b_2) \cdots (a_r \, b_r) (a_{r+1} \, b_{r+1}) = \sigma.
  \end{equation}
  The transitivity condition forces the element $d$ to appear in some transposition, and the monotonicity condition forces it to appear in the last transposition, so it must be that $b_{r+1} = d$. If we move this transposition to the other side of the equation and set $\sigma' = \sigma (a_{r+1} \, b_{r+1})$, we get the shorter monotone factorization
  \begin{equation}\label{eq:fact2}
    (a_1 \, b_1) (a_2 \, b_2) \cdots (a_r \, b_r) = \sigma'.
  \end{equation}
  Depending on whether $a_{r+1}$ is in the same cycle of $\sigma'$ as $b_{r+1}$ and whether \eqref{eq:fact2} is still transitive, the shorter factorization falls into exactly one of the following three cases, corresponding to the three terms on the right-hand side of the recurrence.
  \begin{description}
    \item[Cut]
      Suppose $a_{r+1}$ and $b_{r+1}$ are in the same cycle of $\sigma'$. Then, $\sigma$ is obtained from $\sigma'$ by cutting the cycle containing $a_{r+1}$ and $b_{r+1}$ in two parts, one containing $a_{r+1}$ and the other containing $b_{r+1}$, so $(a_{r+1} \, b_{r+1})$ is called a \emph{cut} for $\sigma'$, and also for the factorization \eqref{eq:fact1}. Conversely, $a_{r+1}$ and $b_{r+1}$ are in different cycles of $\sigma$, and $\sigma'$ is obtained from $\sigma$ by joining these two cycles, so the transposition $(a_{r+1} \, b_{r+1})$ is called a \emph{join} for $\sigma$. Note that in the case of a cut, \eqref{eq:fact2} is transitive if and only if \eqref{eq:fact1} is transitive.
      
      For $k' \geq 1$, there are $k' m_{k'}(\alpha)$ possible choices for $a_{r+1}$ in a cycle of $\sigma$ of length $k'$ other than the one containing $b_{r+1}$. For each of these choices, $(a_{r+1} \, b_{r+1})$ is a cut and $\sigma'$ has cycle type $\alpha \setminus \{k'\} \cup \{k + k'\}$. Thus, the number of transitive monotone factorizations of $\sigma$ where the last factor is a cut is
      \[
        \sum_{k' \geq 1} k' m_{k'}(\alpha) M^r(\alpha \setminus \{k'\} \cup \{k + k'\}),
      \]
      which is the first term in the recurrence.
    
    \item[Redundant join]
      Now suppose that $(a_{r+1} \, b_{r+1})$ is a join for $\sigma'$ and that \eqref{eq:fact2} is transitive. Then, we say that $(a_{r+1} \, b_{r+1})$ is a \emph{redundant join} for \eqref{eq:fact1}.
      
      The transposition $(a_{r+1} \, b_{r+1})$ is a join for $\sigma'$ if and only if it is a cut for $\sigma$, and there are $k - 1$ ways of cutting the $k$-cycle of $\sigma$ containing $b_{r+1}$. Thus, the number of transitive monotone factorizations of $\sigma$ where the last factor is a redundant join is
      \[
        \sum_{k' = 1}^{k - 1} M^r(\alpha \cup \{k', k - k'\}),
      \]
      which is the second term in the recurrence.
    
    \item[Essential join]
      Finally, suppose that $(a_{r+1} \, b_{r+1})$ is a join for $\sigma'$ and that \eqref{eq:fact2} is not transitive. Then, we say that $(a_{r+1} \, b_{r+1})$ is an \emph{essential join} for \eqref{eq:fact1}. In this case, the action of the subgroup $\langle (a_1 \, b_1), \ldots, (a_r \, b_r) \rangle$ must have exactly two orbits on the ground set, one containing $a_{r+1}$ and the other containing $b_{r+1}$. Since transpositions acting on different orbits commute, \eqref{eq:fact2} can be rearranged into a product of two transitive monotone factorizations on these orbits. Conversely, given a transitive monotone factorization for each orbit, this process can be reversed, and the monotonicity condition guarantees uniqueness of the result.
      
      As with redundant joins, there are $k - 1$ choices for $a_{r+1}$ to split the $k$-cycle of $\sigma$ containing $b_{r+1}$. Each of the other cycles of $\sigma$ must be in one of the two orbits, so there are $2^{\ell(\alpha)}$ choices for the orbit containing $a_{r+1}$. Thus, the number of transitive monotone factorizations of $\sigma$ where the last factor is an essential join is
      \[
        \sum_{k' = 1}^{k - 1} \sum_{r' = 0}^r \sum_{\alpha' \subseteq \alpha} M^{r'}(\alpha' \cup \{k'\}) M^{r - r'}(\alpha \setminus \alpha' \cup \{k - k'\}),
      \]
      which is the third term in the recurrence.
      \qedhere
  \end{description}
\end{proof}

\subsection{Monotone join-cut equation}\label{sec:jcequation}

Since the numbers $M^r(\alpha)$ are a rescaled version of the monotone single Hurwitz numbers $H^r(\alpha)$, we can rewrite the recurrence relation for $M^r(\alpha)$ from \autoref{thm:recur} as a partial differential equation for the generating function $\Mon$. The result is the monotone join-cut equation of \autoref{thm:joincut}, which we restate here for convenience.

\restatejoincut*

\begin{proof}
  This equation can be obtained by multiplying the recurrence relation~\eqref{eq:recurrence} by the weight
  \[
    \frac{\abs{\C_\alpha} z^{d+k} t^r p_\alpha p_k}{2\,d\,!}
  \]
  and summing over all choices of $d, \alpha, k, r$ with $d \geq 0$, $\alpha \vdash d$, $k \geq 1$, and $r \geq 0$. The resulting sum can then be rewritten in terms of the generating function $\Mon$ via the defining equations~\eqref{eq:defM} and~\eqref{eq:defMon}, together with the fact that
  \[
    \abs{\C_\alpha} = \frac{d\,!}{\prod_{j \geq 1} j^{m_j(\alpha)} \, m_j(\alpha)!}.
  \]
  This shows that $\Mon$ is indeed a solution of the partial differential equation. To see that the solution is unique, note that apart from $d = 0$, comparing the coefficient of $z^d t^{-1}$ of each side of the partial differential equation uniquely determines $[z^d t^0] \Mon$, and comparing the coefficient of $z^d t^r$ of each side for $r \geq 0$ uniquely determines $[z^d t^{r+1}] \Mon$ in terms of $[z^d t^r] \Mon$.
\end{proof}

\subsection{Topological recursion}\label{sec:topological}

In this section, we define a different type of generating function for monotone Hurwitz numbers which is similar to the type of generating function for classical Hurwitz numbers that has previously arisen in the physics literature. Specifically, by analogy with the generating function $H_g(x_1, x_2, \ldots, x_\ell)$ for Hurwitz numbers considered by Bouchard and Mari\~no~\cite[Equations~(2.11) and~(2.12)]{bouchard-marino08}, consider the generating function
\begin{equation}\label{eq:deftopological}
  \mathbf{M}_g(x_1, x_2, \ldots, x_\ell) = \sum_{\alpha_1, \alpha_2, \ldots, \alpha_\ell \geq 1} \frac{\mon_g(\alpha)}{\abs{\C_\alpha}} x_1^{\alpha_1 - 1} x_2^{\alpha_2 - 1} \cdots x_\ell^{\alpha_\ell - 1},
\end{equation}
where we take $\alpha = (\alpha_1, \alpha_2, \ldots, \alpha_\ell)$ to be a \emph{composition}, that is, an $\ell$-tuple of positive integers. One form of recurrence for Hurwitz numbers, expressed in terms of the series $H_g(x_1, x_2, \ldots, x_\ell)$, is referred to as \emph{topological recursion} (see, \textit{e.g.}, \cite[Conjecture~2.1]{bouchard-marino08}; \cite[Remark~4.9]{eynard-mulase-safnuk09}; \cite[Definition~4.2]{eynard-orantin07}). The corresponding recurrence for monotone Hurwitz numbers, expressed in terms of the series $\mathbf{M}_g(x_1, x_2, \ldots, x_\ell)$, is given in the following result.

\begin{theorem}\label{thm:toprec}
  For $g \geq 0$ and $\ell \geq 1$, we have
  \begin{multline}
    \mathbf{M}_g(x_1, x_2, \ldots, x_\ell)
      = \delta_{g,0} \delta_{\ell,1}
      + x_1 \mathbf{M}_{g-1}(x_1, x_1, x_2, \ldots, x_\ell) \vphantom{\sum_{j=2}^\ell} \label{eq:toporec}\\
      + \sum_{j=2}^\ell \diff{x_j} \left( \frac{x_1 \mathbf{M}_g(x_1, \ldots, \widehat{x_j}, \ldots x_\ell) - x_j \mathbf{M}_g(x_2, \ldots, x_\ell)}{x_1 - x_j} \right) \\
      + \sum_{g'=0}^g \sum_{S \subseteq \{2, \ldots, k\}} x_1 \mathbf{M}_{g'}(x_1, x_S) \mathbf{M}_{g-g'}(x_1, x_{\overline{S}}),
  \end{multline}
  where $x_1, \ldots, \widehat{x_j}, \ldots x_\ell$ is the list of all variables $x_1, \ldots, x_\ell$ except $x_j$, $x_S$ is the product of all variables $x_j$ with $j \in S$, and $\overline{S} = \{2, \ldots, k\} \setminus S$.
\end{theorem}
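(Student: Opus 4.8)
The plan is to derive the topological recursion from the recurrence in \autoref{thm:recur} by translating it into the language of the generating functions $\mathbf{M}_g(x_1,\dots,x_\ell)$. The key observation is that the recurrence \eqref{eq:recurrence} already has the combinatorial shape of a topological recursion: its three terms (cut, redundant join, essential join) correspond exactly to the three sums on the right-hand side of \eqref{eq:toporec}. The main task is to perform the bookkeeping of marking the cycle that contains the distinguished element $d$ by the variable $x_1$, and the remaining cycles by $x_2,\dots,x_\ell$.

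First I would fix $g$ and $\ell$ and extract from \eqref{eq:recurrence} the equation satisfied by $\mon_g(\alpha)/\abs{\C_\alpha} = M_g(\alpha)$, using the Riemann--Hurwitz relation \eqref{eq:riemann-hurwitz} to convert the index $r$ into the genus $g$; note that under \eqref{eq:riemann-hurwitz}, adding a part of size $k$ and incrementing $r$ by one keeps $g$ fixed when the part is created by a cut, drops $g$ by one for a redundant join (since $\ell$ increases by one while $r$ increases by one), and partitions $g$ additively across the two factors of an essential join. Then I would multiply the resulting relation by $x_1^{k-1}\prod_{j\ge 2} x_j^{\alpha_j-1}$ and sum over all compositions. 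The cut term, $\sum_{k'\ge 1} k' m_{k'}(\alpha)\,M^r(\alpha\setminus\{k'\}\cup\{k+k'\})$, becomes the operator $x_1\,\partial/\partial x_1$ applied to a single factor and, after re-indexing, collapses the two variables $x_1$ and (the variable formerly marking the $k'$-cycle) into one; this is precisely what produces the term $x_1\mathbf{M}_{g-1}(x_1,x_1,x_2,\dots,x_\ell)$ together with the divided-difference sum over $j=2,\dots,\ell$. The essential-join term, being a convolution over subpartitions $\alpha'\subseteq\alpha$ and over $r'+r''=r$, translates directly into the product $\sum_{g'}\sum_{S}x_1\mathbf{M}_{g'}(x_1,x_S)\mathbf{M}_{g-g'}(x_1,x_{\overline S})$. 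The initial condition $M^0((1))=1$ supplies the term $\delta_{g,0}\delta_{\ell,1}$.

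The most delicate step, and the one I expect to be the main obstacle, is the treatment of the \textbf{cut} term and the \textbf{redundant join} term together: the factor $k' m_{k'}(\alpha)$ in front of the cut sum must be resolved into a derivative, and the sums $\sum_{k'=1}^{k-1}$ in both the redundant-join term and the inner index of the cut need to be reorganized as generating-function operations in the single variable $x_1$. Concretely, one has to recognize that $\sum_{k\ge 1}\sum_{k'=1}^{k-1} f(k')g(k-k')\,x_1^{k-1}$ is the Cauchy-type product that yields the self-substitution $x_1\mapsto (x_1,x_1)$ after accounting for the shift in exponents, and that the asymmetric divided difference $\frac{x_1\mathbf{M}_g(\dots\widehat{x_j}\dots)-x_j\mathbf{M}_g(x_2,\dots,x_\ell)}{x_1-x_j}$ is exactly the generating function that records merging the cycle marked $x_j$ into the cycle marked $x_1$ via a cut, with the outer $\partial/\partial x_j$ accounting for the combinatorial factor $k'$. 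Verifying that the exponent shifts (the $-1$'s in the exponents $x_i^{\alpha_i-1}$ in \eqref{eq:deftopological}) conspire correctly so that no spurious factors of $x_i$ remain, and that the $\delta$ terms and boundary cases ($\ell=1$, small $g$) are consistent, is where the care is needed; once the dictionary between the three cases of \autoref{thm:recur} and the three sums of \eqref{eq:toporec} is set up precisely, the identity follows by comparing coefficients of $x_1^{k-1}\prod_{j\ge2}x_j^{\alpha_j-1}$ on both sides.
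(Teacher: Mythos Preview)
Your approach is exactly the paper's: multiply the recurrence of \autoref{thm:recur} by the monomial weight $x_1^{k-1} x_2^{\alpha_1-1} \cdots x_\ell^{\alpha_{\ell-1}-1}$, sum over all compositions, and use Riemann--Hurwitz to fix $r$ in terms of $g$. The paper's own proof says little more than this.

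However, your detailed term-matching is scrambled. You correctly observe in the first paragraph that the cut preserves the genus while the redundant join drops it by one, but in the very next sentence you attribute the term $x_1\mathbf{M}_{g-1}(x_1,x_1,x_2,\dots,x_\ell)$ to the cut. It is the other way round. The \emph{redundant join} $\sum_{k'=1}^{k-1} M^r(\alpha\cup\{k',k-k'\})$ splits the $k$-cycle into two cycles, both to be marked by $x_1$; it increases $\ell$ by one and hence drops the genus by one, producing exactly $x_1\mathbf{M}_{g-1}(x_1,x_1,x_2,\dots,x_\ell)$. The \emph{cut} $\sum_{k'} k' m_{k'}(\alpha)\, M^r(\alpha\setminus\{k'\}\cup\{k+k'\})$ absorbs the cycle marked by some $x_j$ (with $j\ge 2$) into the $x_1$-cycle, keeps the genus fixed, and yields the divided-difference sum over $j$; the outer $\partial/\partial x_j$ (not $x_1\,\partial/\partial x_1$) is what supplies the combinatorial factor $k'=\alpha_j$. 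Once you straighten out this dictionary, the coefficient comparison goes through as you describe.
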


\begin{proof}
  Like the monotone join-cut equation of \autoref{thm:joincut}, this equation can be obtained by multiplying the recurrence~\eqref{eq:recurrence} by a suitable weight and summing over an appropriate set of choices. In this case, the appropriate weight is
  \[
    x_1^{k-1} x_2^{\alpha_1-1} x_3^{\alpha_2-1} \cdots x_\ell^{\alpha_{\ell-1}-1},
  \]
  and the sum is over all positive integer choices of $k, \alpha_1, \alpha_2, \ldots, \alpha_{\ell-1}$. In view of the Riemann-Hurwitz formula~\eqref{eq:riemann-hurwitz}, the appropriate choice of $r$ is
  \[
    r = k + \ell + 2g - 3 + \sum_{j=1}^{\ell-1} \alpha_j.
  \]
  The resulting summations can then be rewritten in terms of the appropriate generating functions by using the defining equations~\eqref{eq:defM} and~\eqref{eq:deftopological}.
\end{proof}

\begin{remark}
  Note that there is an asymmetry between the variable $x_1$ and the variables $x_2, \ldots, x_\ell$ in~\eqref{eq:toporec}, even though $ \mathbf{M}_g(x_1, x_2, \ldots, x_\ell)$ itself is symmetric in all variables.
\end{remark}

For small values of $g$ and $\ell$, the recurrence~ \eqref{eq:toporec} can be solved directly. In particular, we obtain
\begin{align*}
  \mathbf{M}_0(x_1) &= \frac{1 - \sqrt{1 - 4x_1}}{2x_1}, \\
  \mathbf{M}_0(x_1, x_2) &= \frac{4}{\sqrt{1 - 4x_1} \sqrt{1 - 4x_2} (\sqrt{1 - 4x_1} + \sqrt{1 - 4x_2})^2}.
\end{align*}
If we define $y_i$ by $y_i = 1 + x_i y_i^2$ for $i \geq 1$, then these can be rewritten as
\begin{align*}
  \mathbf{M}_0(x_1) &= y_1, \\
  \mathbf{M}_0(x_1, x_2) &= \frac{x_1 \diff[y_1]{x_1} x_2 \diff[y_2]{x_2} (x_2 y_2 - x_1 y_1)^2}{(y_1 - 1) (y_2 - 1) (x_2 - x_1)^2}.
\end{align*}
In the terminology of Eynard and Orantin~\cite{eynard-orantin07}, this seems to mean that we have the spectral curve $y = 1 + xy^2$, but it is unclear to us what the correct notion of Bergmann kernel should be in our case.

\section{Intermediate forms}\label{sec:intermed}

In this section, we introduce some algebraic methodology that will allow us to solve the monotone join-cut equation. This  methodology consists of a set of generating functions for monotone Hurwitz numbers of fixed genus, together with families of operators. These allow us to transform the monotone join-cut equation into an algebraic operator equation for these genus-specific generating functions.

\subsection{Algebraic methodology}\label{sec:liftprojsplit}

As the first part of our algebraic methodology, we define three families of operators which use a new countable set of indeterminates $\xx = (x_1, x_2, \ldots)$, algebraically independent of $\pp = (p_1, p_2, \ldots)$. We begin with lifting operators.

\begin{definition}
  Let $\xx = (x_1, x_2, \ldots)$ and $\pp = (p_1, p_2, \ldots)$ be countable sets of indeterminates. The $i$th \emph{lifting operator} $\Delta_i$ is the $\QQ[[\xx]]$-linear differential operator on the ring $\QQ[[\xx, \pp]]$ defined by
  \[
    \Delta_i = \sum_{k \geq 1} k x_i^k \diff{p_k}, \qquad i \geq 1.
  \]
\end{definition}

The combinatorial effect of $\Delta_i$, when applied to a generating function, is to pick a cycle marked by $p_k$ in all possible ways and mark it by $k x_i^k$ instead, that is, by $x_i^k$ once for each element of the cycle. Note that $\Delta_i x_j = 0$ for all $j$, so that
\[
  \Delta_i^2 = \sum_{j,k \geq 1} jk \, x_i^{j+k} \sdiff{p_j}{p_k}.
\]

Accompanying these lifting operators, we also have projection operators.

\begin{definition}
  Let $\xx = (x_1, x_2, \ldots)$ and $\pp = (p_1, p_2, \ldots)$ be countable sets of indeterminates. The $i$th \emph{projection operator} $\Pi_i$ is the $\QQ[[\pp]]$-linear idempotent operator on the ring $\QQ[[\xx, \pp]]$ defined by
  \[
    \Pi_i = [x_i^0] + \sum_{k \geq 1} p_k [x_i^k], \qquad i \geq 1.
  \]
\end{definition}

The combinatorial effect of $\Pi_i$, when applied to a generating function, is to take any cycle marked by $x_i^k$ and mark it by $p_k$ instead. The combined effect of a lift and a projection when applied to a generating function in $\QQ[[\pp]]$ is given by
\[
  \Pi_i \Delta_i = \sum_{k \geq 1} k p_k \diff{p_k}.
\]

Finally, we introduce splitting operators.

\begin{definition}
  Let $F(x_i)$ be an element of $\QQ[[\xx, \pp]]$, considered as a power series in $x_i$, and let $j \geq 1$ be an index other than $i \geq 1$. Then the $i$-to-$j$ \emph{splitting} operator is defined by
  \[
    \Split_{i \to j} F(x_i) = \frac{x_j F(x_i) - x_i F(x_j)}{x_i - x_j} + F(0),
  \]
  so that
  \[
    \Split_{i \to j} x_i^k = x_i^{k-1} x_j + x_i^{k-2} x_j^2 + \cdots + x_i x_j^{k-1}.
  \]
\end{definition}

Combinatorially, the effect of $\Split_{i \to j}$ on a generating function is to take the cycle marked by $x_i$ and split it in two cycles, marked by $x_i$ and $x_j$ respectively, in all possible ways. The combined effect of a lift, a split and a projection on a generating function in $\QQ[[\pp]]$ is
\[
  \Pi_1 \Pi_2 \Split_{1 \to 2} \Delta_1 = \sum_{i,j \geq 1} (i + j) p_i p_j \diff{p_{i + j}}.
\]

As the second part of our algebraic methodology, we define the generating functions
\begin{equation}\label{eq:defMon_g}
  \Mon_g = \sum_{d \geq 1} \sum_{\alpha \vdash d} \frac{\mon_g(\alpha) p_\alpha}{d\,!}, \qquad g \geq 0.
\end{equation}
Thus, $\Mon_g$ is the generating function for genus $g$ monotone single Hurwitz numbers where, combinatorially, $p_1, p_2, \ldots$ are ordinary markers for the parts of $\alpha$, and there is an implicit exponential marker for the size $d$ of the ground set.

\subsection{Recasting the monotone join-cut equation}\label{sec:recasting}

We are now able to recast monotone join-cut equation as an algebraic operator equation involving our genus-specific generating series $\Mon_g$ and the three operators that we have introduced above.

\begin{theorem}\label{thm:pdeunique}
  \ \par 
  \begin{enumerate}[(i)]
    \item
      The generating function $\Delta_1 \Mon_0$ is the unique formal power series solution of the partial differential equation
      \begin{equation}\label{eq:operator-genus-zero}
        \Delta_1 \Mon_0 = \Pi_2 \Split_{1 \to 2} \Delta_1 \Mon_0 + (\Delta_1 \Mon_0)^2 + x_1
      \end{equation}
      with the initial condition $[p_0 x_1^0] \Delta_1 \Mon_0 = 0$.
    
    \item
      For $g \geq 1$, the generating function $\Delta_1 \Mon_g$ is uniquely determined in terms of $\Delta_1 \Mon_0, \Delta_1 \Mon_1, \ldots, \Delta_1 \Mon_{g-1}$ by the equation
      \begin{equation}\label{eq:operator-higher-genus}
        \left( 1 - 2 \Delta_1 \Mon_0 - \Pi_2 \Split_{1 \to 2} \right) \Delta_1 \Mon_g = \Delta_1^2 \Mon_{g-1} + \sum_{g'=1}^{g-1} \Delta_1 \Mon_{g'} \, \Delta_1 \Mon_{g-g'}.
      \end{equation}
    
    \item
      For $g \geq 0$, the generating function $\Mon_g$ is uniquely determined by the generating function $\Delta_1 \Mon_g$ and the fact that $[p_0] \Mon_g = 0$.
  \end{enumerate}
\end{theorem}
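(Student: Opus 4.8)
The plan is to derive all three parts from the monotone join-cut equation of \autoref{thm:joincut} by applying the lifting operator $\Delta_1$ and extracting the contribution of each genus. First I would recall from \autoref{sec:group-algebra} and the definition~\eqref{eq:defMon_g} that the full generating function decomposes by genus as $\Mon(z, t, \pp) = \sum_{g \geq 0} t^{?} \Mon_g$ up to the appropriate power of $t$ dictated by the Riemann--Hurwitz relation~\eqref{eq:riemann-hurwitz}; concretely, the coefficient of $z^d$ in $\Mon$ expands over partitions $\alpha \vdash d$ with $t^{d + \ell(\alpha) + 2g - 2}$, so fixing $d$ and tracking the power of $t$ is equivalent to fixing the genus. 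The key computational inputs are the three operator identities established in \autoref{sec:liftprojsplit}:
\[
  \Pi_1 \Delta_1 = \sum_{k \geq 1} k p_k \diff{p_k}, \qquad
  \Delta_1^2 = \sum_{j,k \geq 1} jk\, x_1^{j+k} \sdiff{p_j}{p_k}, \qquad
  \Pi_1 \Pi_2 \Split_{1 \to 2} \Delta_1 = \sum_{i,j \geq 1} (i+j) p_i p_j \diff{p_{i+j}}.
\]
Applying $\Delta_1$ to the right-hand side of the monotone join-cut equation turns the three differential forms appearing there into, respectively, $\Pi_2 \Split_{1 \to 2} \Delta_1$ applied to $\Mon$, $\Delta_1^2 \Mon$, and $\Delta_1 \Mon \cdot \Delta_1 \Mon$ (here one must be careful that $\Delta_1$ acts as a derivation on the product term, producing both $\Delta_1^2 \Mon$-type and $(\Delta_1 \Mon)^2$-type contributions, and that the Leibniz rule correctly reproduces the split between the quadratic-in-$\Mon$ and second-derivative terms). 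The left-hand side $\frac{1}{2t}(z \diff[\Mon]{z} - z p_1)$, after applying $\Delta_1$ and using $\Delta_1(z p_1) = z x_1$, becomes $\frac{1}{2t}(z \diff{z} \Delta_1 \Mon - z x_1)$; the operator $z\diff{z}$ is diagonal on the $z$-grading, so extracting the coefficient of a fixed power of $z$ is straightforward.

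Next I would extract the genus-$g$ piece. Because the left-hand side carries a factor $1/t$ while the right-hand side does not, comparing coefficients of $t$ shifts the genus by one on one side of the equation: the $t$-degree on the left for the term coming from $\Mon_g$ is one less than the natural degree, which is precisely what couples genus $g$ on the left to genus $g$ (through the linear terms) and genera $g-1$, $g'$, $g-g'$ on the right (through $\Delta_1^2 \Mon_{g-1}$ and the convolution $\sum \Delta_1 \Mon_{g'} \Delta_1 \Mon_{g-g'}$). For $g = 0$ there is no lower genus to feed in, the convolution sum is empty except for the $g' = 0 = g - g'$ term which gives $(\Delta_1 \Mon_0)^2$, and the inhomogeneous term $x_1$ survives from $-z x_1$ after accounting for the $z$-grading and the fact that $\Mon_0$ starts at $z^1$; this yields~\eqref{eq:operator-genus-zero}. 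For $g \geq 1$ the inhomogeneous term $x_1$ disappears (it only contributes in genus zero), the genus-$g$ self-convolution term $(\Delta_1 \Mon_0)(\Delta_1 \Mon_g)$ appears twice and gets moved to the left-hand side to produce the operator $1 - 2\Delta_1 \Mon_0 - \Pi_2 \Split_{1\to 2}$ acting on $\Delta_1 \Mon_g$, and everything else lands on the right as in~\eqref{eq:operator-higher-genus}.

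For uniqueness in parts (i) and (ii), I would argue by induction on the $z$-grading (equivalently on $d$): the operators $\Pi_2 \Split_{1 \to 2}$ and multiplication by $\Delta_1 \Mon_0$ (whose lowest term is order $z$) strictly raise or preserve the $z$-degree in a way that, together with the initial condition $[p_0 x_1^0]\Delta_1 \Mon_0 = 0$, lets one solve for the degree-$d$ part of $\Delta_1 \Mon_g$ in terms of strictly lower-degree data; this is the same mechanism used in the proof of \autoref{thm:joincut}. Part (iii) then follows because $\Delta_1$ is injective on the subspace $p_0 \QQ[[\pp]]$ of series with no constant term in $\pp$: the operator $\Delta_1$ followed by $\Pi_1$ equals $\sum_k k p_k \partial_{p_k}$, which is the grading operator counting total degree in $\pp$ and hence invertible on series with $[p_0] = 0$; so $\Mon_g$ is recovered from $\Delta_1 \Mon_g$ by $\Mon_g = (\sum_k k p_k \partial_{p_k})^{-1} \Pi_1 \Delta_1 \Mon_g$ together with $[p_0]\Mon_g = 0$. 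The main obstacle I anticipate is purely bookkeeping: getting the powers of $t$ and the factor of $2$ right when passing from the symmetric differential-form right-hand side of \autoref{thm:joincut} to the operator form, in particular verifying that the Leibniz action of $\Delta_1$ on the product term $\diff[\Mon]{p_i}\diff[\Mon]{p_j}$ reorganizes correctly into a genuine square $(\Delta_1 \Mon_0)^2$ in genus zero and into the cross term $2\Delta_1\Mon_0\,\Delta_1\Mon_g$ plus the convolution $\sum_{g'=1}^{g-1}\Delta_1\Mon_{g'}\,\Delta_1\Mon_{g-g'}$ in higher genus.
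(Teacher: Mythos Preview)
There is a genuine gap in the derivation of \eqref{eq:operator-genus-zero} and \eqref{eq:operator-higher-genus}. Your plan is to apply $\Delta_1$ to both sides of the monotone join-cut equation of \autoref{thm:joincut}, but this does not produce the lifted equation you want. The point is that the equation in \autoref{thm:joincut} is already the $\Pi_1$-projection of the equation \eqref{eq:auxdiffeq}: for instance, $\sum_{i,j}(i+j)p_ip_j\,\partial_{p_{i+j}}\Mon=\Pi_1\Pi_2\Split_{1\to2}\Delta_1\Mon$. Applying $\Delta_1$ does not invert $\Pi_1$; it is a derivation in the $p$-variables and hits the coefficients $p_ip_j$, $p_{i+j}$ as well as $\Mon$. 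Concretely,
\[
\Delta_1\!\left(\sum_{i,j}(i+j)p_ip_j\,\partial_{p_{i+j}}\Mon\right)
=2\sum_{i,j} i(i+j)\,x_1^{\,i}p_j\,\partial_{p_{i+j}}\Mon+\sum_{i,j}(i+j)p_ip_j\,\partial_{p_{i+j}}(\Delta_1\Mon),
\]
and neither summand equals $\Pi_2\Split_{1\to2}\Delta_1\Mon=\sum_{i,j}(i+j)\,x_1^{\,i}p_j\,\partial_{p_{i+j}}\Mon$; the spurious factor $i$ does not cancel against anything coming from the other terms. The same phenomenon occurs for the $\Delta_1^2$ and $(\Delta_1\Mon)^2$ terms. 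This is exactly why the paper does \emph{not} derive \autoref{thm:pdeunique} from \autoref{thm:joincut}: it returns to the combinatorial recurrence of \autoref{thm:recur} and sums with the asymmetric weight $u^g p_\alpha x_1^{\,k}/\prod_j j^{m_j(\alpha)}m_j(\alpha)!$, which distinguishes the part $k$ by $x_1^{\,k}$ instead of $p_k$ and records genus with a separate marker $u$. The remark immediately following the proof makes this explicit: the operator form is ``technically slightly stronger'' than \autoref{thm:joincut} precisely because it uses a less symmetric weight. So the fix is not more careful bookkeeping with $t$, but to go back to \eqref{eq:recurrence} directly.

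Your argument for part~(iii), recovering $\Mon_g$ from $\Delta_1\Mon_g$ via $\Pi_1\Delta_1=\sum_k k p_k\,\partial_{p_k}$ and the condition $[p_0]\Mon_g=0$, is correct and is exactly what the paper does.
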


\begin{proof}
  As with \autoref{thm:joincut} and \autoref{thm:toprec}, this result is obtained by multiplying the recurrence from \autoref{thm:recur} by a suitable weight and summing over a set of possible choices. Consider the generating function
  \[
    F = \sum_{g \geq 0} u^g \Mon_g = \sum_{g \geq 0} u^g \sum_{d \geq 1} \sum_{\alpha \vdash d} \frac{\mon_g(\alpha) p_\alpha}{d\,!},
  \]
  where $u$ is an ordinary for the genus $g$. In view of~\eqref{eq:riemann-hurwitz} and~\eqref{eq:defM}, we have
  \begin{align*}
    F &= \sum_{g \geq 0} u^g \sum_{d \geq 1} \sum_{\alpha \vdash d} \frac{M^{d+\ell(\alpha)+2g-2}(\alpha) \, p_\alpha}{\prod_{j \geq 1} j^{m_j(\alpha)} \, m_j(\alpha)!}, \\
    \Delta_1 F &= \sum_{g \geq 0} u^g \sum_{d \geq 0} \sum_{\alpha \vdash d} \sum_{k \geq 1} \frac{M^{d+k+\ell(\alpha)+2g-1}(\alpha \cup \{k\}) \, p_\alpha x_1^k}{\prod_{j \geq 1} j^{m_j(\alpha)} \, m_j(\alpha)!}.
  \end{align*}
  Thus, by multiplying the recurrence~\eqref{eq:recurrence} by the weight
  \[
    \frac{u^g p_\alpha x_1^k}{\prod_{j \geq 1} j^{m_j(\alpha)} \, m_j(\alpha)!}
  \]
  and summing over all choices of $g, d, \alpha, k$ with $g \geq 0$, $d \geq 0$, $\alpha \vdash d$, $k \geq 1$, and $r = d+k+\ell(\alpha)+2g-2$, we obtain the partial differential equation
  \begin{equation}\label{eq:auxdiffeq}
    \Delta_1 F - x_1 = \Pi_2 \Split_{1 \to 2} \Delta_1 F + u \Delta_1^2 F + (\Delta_1 F)^2.
  \end{equation}
  To show that $\Delta_1 F$ is the unique solution of this partial differential equation with $[u^0 p_0 x_1^0] \Delta_1 F = 0$, note that~\eqref{eq:auxdiffeq} exactly captures the recurrence of \autoref{thm:recur}, so each non-constant coefficient of $\Delta_1 F$ is uniquely determined.
  
  Extracting the coefficient of $u^g$ from~\eqref{eq:auxdiffeq} and rearranging terms gives the monotone join-cut equations of the theorem statement for $g = 0$ and $g \geq 1$.
  
  Finally, note that given $\Delta_1 \Mon_g$, we can compute
  \[
    \Pi_1 \Delta_1 \Mon_g = \sum_{d \geq 1} \sum_{\alpha \vdash d} \frac{\mon_g(\alpha) p_\alpha}{(d-1)!},
  \]
  which uniquely determines every coefficient of $\Mon_g$ except for the constant term.
\end{proof}

\begin{remark}
  This form of the monotone join-cut equation is technically slightly stronger than the one given in \autoref{thm:joincut}, since it is obtained from the recurrence relation in \autoref{thm:recur} by using a less symmetric weight.
\end{remark}

\begin{remark}
  The monotone join-cut equation for higher genera will be the subject of a forthcoming paper~\cite{goulden-guay-paquet-novak12b}, which we won't discuss further here.
\end{remark}

\section{Transformed  variables and proof of the main result}\label{sec:solution}

In this section, we prove the main result, \autoref{thm:gzformula}. Our strategy is to define the series $\guess$ by
\begin{equation}\label{eq:Fseries}
  \guess = \sum_{d \geq 1} \sum_{\alpha \vdash d} \frac{p_\alpha}{\abs{\Aut \alpha}} (2d + 1)^{\overline{\ell(\alpha) - 3}} \prod_{j=1}^{\ell(\alpha)} \binom{2\alpha_j}{\alpha_j},
\end{equation}
and then to show that the series $\Delta_1 \guess$ satisfies the genus zero monotone join-cut equation~\eqref{eq:operator-genus-zero}.

\begin{remark}
  We initially conjectured this formula for genus zero monotone Hurwitz numbers after generating extensive numerical data, using the group algebra approach described in \autoref{sec:group-algebra}, together with the character theory and generating series capabilities of Sage~\cite{sage}. In particular, the case where $\alpha$ has $\ell(\alpha) = 3$ parts was very suggestive, since the formula then breaks down into a product of three terms. This was also our first indication of the striking similarities between monotone Hurwitz numbers and classical Hurwitz numbers.
\end{remark}

\subsection{Transformed variables and Lagrange inversion}\label{sec:lagrange}

In working with the series $\guess$, it is convenient to change variables from $\pp = (p_1, p_2, \ldots)$ to $\qq = (q_1, q_2, \ldots)$, where
\begin{equation}\label{eq:pqrel}
  q_j = p_j \left(1 - \sum_{k \geq 1} \binom{2k}{k} q_k\right)^{-2j}, \qquad j \geq 1.
\end{equation}
This change of variables is invertible, and can be carried out using the Lagrange Implicit Function Theorem in many variables (see~\cite{goulden-jackson04}). The first result expressing $\guess$ in terms of the new indeterminates $\qq$ involves the differential operator
\begin{equation}\label{eq:defDD}
  \DD = \sum_{k \geq 1} k p_k\diff{p_k}.
\end{equation}

\begin{theorem}\label{thm:F3D}
Let $\gamma = \sum_{k \geq 1} \binom{2k}{k} q_k$ and $\eta = \sum_{k \geq 1} (2k + 1) \binom{2k}{k} q_k$. Then
\[
  (2\DD - 2) (2\DD - 1) (2\DD) \guess = \frac{(1 - \gamma)^3}{1 - \eta} - 1.
\]
\end{theorem}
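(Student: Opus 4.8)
The plan is to pass to the $\qq$-variables via the Lagrange Implicit Function Theorem and to show that the left-hand side, which is the generating function $\guess$ with each factor $p_\alpha$ multiplied by $2\alpha_j$ ranging over the three iterated applications of $\DD$, has a closed form once re-expressed in the $\qq$'s. First I would record how $\DD$ interacts with the change of variables \eqref{eq:pqrel}. Writing $\gamma = \sum_{k\geq 1}\binom{2k}{k}q_k$, the relation $q_j = p_j(1-\gamma)^{-2j}$ can be inverted: setting $w = 1-\gamma$, one gets $p_j = q_j w^{2j}$ and $w = 1 - \sum_k \binom{2k}{k} q_k$ becomes, after substituting, a functional equation that Lagrange inversion can resolve. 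The key computation is that the operator $\DD = \sum_k k p_k \partial/\partial p_k$, when acting on a function expressed through the $\qq$'s, becomes $\sum_k k q_k \partial/\partial q_k$ plus a correction term coming from the $w$-dependence; I would derive the precise form and then iterate it three times, which is where the three linear factors $(2\DD-2)(2\DD-1)(2\DD)$ will come from in a clean way (the shifts $0,1,2$ in $2\DD$ should match the three factors $(2d+1)^{\overline{\ell-3}}$ shifting to $(2d-1)^{\overline{\ell}}$-type products, i.e. the three applications convert the falling length into controllable rising factorials).

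Concretely, I would compute $\DD \guess$, $\DD^2\guess$, $\DD^3\guess$ — equivalently $(2\DD)\guess$, $(2\DD-1)(2\DD)\guess$, $(2\DD-2)(2\DD-1)(2\DD)\guess$ — by applying $\DD$ termwise to \eqref{eq:Fseries}. Applying $\DD$ to the $\alpha$-term multiplies it by $\sum_j \alpha_j = d$, so $(2\DD)\guess$ has coefficient $2d \cdot (2d+1)^{\overline{\ell-3}}\prod_j\binom{2\alpha_j}{\alpha_j}/|\Aut\alpha|$; then $(2\DD-1)(2\DD)\guess$ picks up $2d(2d-1)(2d+1)^{\overline{\ell-3}} = (2d-1)^{\overline{\ell-1}}$ type products, and $(2\DD-2)(2\DD-1)(2\DD)\guess$ gives $(2d-2)(2d-1)(2d)(2d+1)^{\overline{\ell-3}} = (2d-2)^{\overline{\ell}}$. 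So the target series is
\[
  (2\DD-2)(2\DD-1)(2\DD)\guess = \sum_{d\geq 1}\sum_{\alpha\vdash d}\frac{p_\alpha}{|\Aut\alpha|}\,(2d-2)^{\overline{\ell(\alpha)}}\prod_{j=1}^{\ell(\alpha)}\binom{2\alpha_j}{\alpha_j},
\]
and the claim is that this equals $(1-\gamma)^3/(1-\eta) - 1$ with $\eta = \sum_k(2k+1)\binom{2k}{k}q_k$. I would now substitute $p_j = q_j(1-\gamma)^{2j}$, so $p_\alpha = q_\alpha (1-\gamma)^{2d}$, and recognize $(2d-2)^{\overline{\ell}}(1-\gamma)^{2d}$ together with the sum over $\alpha$ as a multivariate Lagrange-inversion expansion. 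The natural move is to introduce a single extra variable tracking $\ell(\alpha)$ or to recognize $\sum_\alpha q_\alpha/|\Aut\alpha| \cdot (\text{stuff})$ as $[\text{diagonal}]$ of $\prod_j$-type generating functions, then apply the Lagrange Implicit Function Theorem with the kernel $\phi(w) = (1 - \sum_k\binom{2k}{k}q_k w^{2k})^{-1}$ or similar, so that $\gamma$ and $\eta$ appear as the value and the logarithmic derivative of the implicitly-defined series.

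The main obstacle will be identifying exactly the right Lagrange-inversion setup: the rising factorial $(2d-2)^{\overline{\ell(\alpha)}}$ couples the \emph{total} size $d$ with the \emph{number of parts} $\ell$, and $|\Aut\alpha|$ in the denominator means we are really summing over multisets, so a clean single-variable Lagrange argument requires first writing the sum as an exponential/plethystic expression in the $q_k$'s. I expect the bookkeeping to work out as follows: express $(2d-2)^{\overline{\ell}}$ via a Beta-integral or via $\Gamma$-function identity to decouple it into a product over parts times a factor depending only on $d$, apply Lagrange inversion part-by-part to get the $(1-\gamma)$ powers and the series $w$ satisfying $w = 1 - \sum\binom{2k}{k}q_k w^{\cdots}$, and then see $1-\eta$ emerge as $w \cdot (dw/d(\text{scaling}))^{-1}$ or as $1 - \sum(2k+1)\binom{2k}{k}q_k$ directly from differentiating the defining relation of $\gamma$. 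Once the closed form is obtained in the $\qq$'s, uniqueness/consistency checks against the $\ell=1,2,3$ cases (where $\mathbf M_0(x_1)=y_1$ with $y=1+xy^2$ was already computed) confirm the normalization, including the $-1$ which accounts for the $d\geq 1$ truncation versus the constant term $1$ in the geometric-type series $(1-\gamma)^3/(1-\eta)$.
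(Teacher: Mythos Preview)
Your coefficient computation of the left-hand side is correct: applying $(2\DD-2)(2\DD-1)(2\DD)$ to $\guess$ does produce the series $\sum_{d\geq 1}\sum_{\alpha\vdash d}\frac{p_\alpha}{|\Aut\alpha|}\,(2d-2)^{\overline{\ell(\alpha)}}\prod_j\binom{2\alpha_j}{\alpha_j}$. But from there the plan becomes vague exactly where the real content lies, and the concrete steps you sketch would not close the argument.

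The gap is twofold. First, you are missing the clean identification of the coefficient: writing $(2d-2)^{\overline{\ell}} = (-1)^\ell\,\ell!\,\binom{2-2d}{\ell}$ and noting that $[q_\alpha]\gamma^{\ell} = \frac{\ell!}{|\Aut\alpha|}\prod_j\binom{2\alpha_j}{\alpha_j}$ shows at once that
\[
  [p_\alpha]\,(2\DD-2)(2\DD-1)(2\DD)\guess \;=\; [q_\alpha]\,(1-\gamma)^{2-2d}.
\]
No Beta-integral or plethystic decoupling is needed; the rising factorial \emph{is} a binomial coefficient, and the $|\Aut\alpha|$ in the denominator is precisely what the multinomial expansion of $\gamma^{\ell}$ supplies. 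Second, your proposed move ``substitute $p_\alpha = q_\alpha(1-\gamma)^{2d}$ and recognize the result'' goes in the hard direction and ignores the Jacobian. The paper instead computes $[p_\alpha]\Phi$ for an arbitrary $\Phi\in\QQ[[\qq]]$ by the multivariate Lagrange Implicit Function Theorem: with $\phi_j=(1-\gamma)^{-2j}$ the Jacobian matrix $\bigl(\delta_{ij}-q_j\,\partial_{q_j}\log\phi_i\bigr)_{i,j}$ is a rank-one perturbation of the identity, so its determinant is $1-\sum_k\frac{2kq_k}{1-\gamma}\binom{2k}{k}=\frac{1-\eta}{1-\gamma}$, giving
\[
  [p_\alpha]\Phi \;=\; [q_\alpha]\,\frac{(1-\eta)\,\Phi}{(1-\gamma)^{2d+1}}.
\]
Taking $\Phi=(1-\gamma)^3/(1-\eta)$ yields $[p_\alpha]\Phi=[q_\alpha](1-\gamma)^{2-2d}$, which matches the display above for every $\alpha$ with $d\geq 1$; the $-1$ handles the constant term. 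Your instinct that $1-\eta$ should appear as a derivative of the defining relation is right, but it enters as the Lagrange Jacobian, not by re-expanding after a forward substitution. Finally, the first paragraph of your proposal, about rewriting $\DD$ in the $\qq$-variables and iterating, is a detour here: that technique is used later to \emph{invert} the operators $2\DD-i$, not to establish this identity.
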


\begin{proof}
  From~\eqref{eq:Fseries}, for any $\alpha \vdash d$ with $d \geq 1$, we have
  \begin{align*}
    [p_\alpha] (2\DD - 2) (2\DD - 1) (2\DD) \guess
      &= \frac{1}{\abs{\Aut \alpha}} (2d - 2)^{\overline{\ell(\alpha)}} \prod_{j=1}^{\ell(\alpha)} \binom{2\alpha_j}{\alpha_j} \\
      &= \frac{(-1)^\ell \ell(\alpha)!}{\abs{\Aut \alpha}} \binom{2 - 2d}{\ell(\alpha)} \prod_{j=1}^{\ell(\alpha)} \binom{2\alpha_j}{\alpha_j},
  \end{align*}
  and we conclude that
  \begin{equation}\label{eq:DFgamma}
    [p_\alpha] (2\DD - 2) (2\DD - 1) (2\DD) \guess =  [q_\alpha] (1 - \gamma)^{2 - 2d}.
  \end{equation}
  Now let $\phi_j = (1 - \gamma)^{-2j}$, so that~\eqref{eq:pqrel} becomes $q_j = p_j \phi_j$, $ j\geq 1$. Then, from the multivariate Lagrange Implicit Function Theorem~\cite[Theorem 1.2.9]{goulden-jackson04}, for any formal power series $\Phi \in \QQ[[\qq]]$, we obtain
  \begin{align*}
    [p_\alpha] \Phi
      &= [q_\alpha] \Phi \, \phi_\alpha \det\left( \delta_{ij} - q_j \diff{q_j} \log \phi_i \right)_{i,j \geq 1} \\
      &= [q_\alpha] \Phi \, \phi_\alpha \det\left( \delta_{ij} - \frac{2i q_j}{1 - \gamma} \binom{2j}{j} \right)_{i,j \geq 1},
  \end{align*}
  where $\phi_\alpha = \prod_{j \geq 1} \phi_{\alpha_j}$. Then we have $\phi_\alpha = (1 - \gamma)^{-2d}$, and using the fact that $\det(I + M) = 1 + \trace(M)$ for any matrix $M$ of rank zero or one, we can evaluate the determinant as
  \[
    \det\left( \delta_{ij} - q_j \diff{q_j} \log \phi_i \right)_{i,j \geq 1}
      = 1 - \sum_{k \geq 1} \frac{2k q_k}{1 - \gamma} \binom{2k}{k}
      = \frac{1 - \eta}{1 - \gamma}.
  \]
  Substituting, we obtain
  \[
    [p_\alpha] \Phi = [q_\alpha] \frac{(1 - \eta) \Phi}{(1 - \gamma)^{2d + 1}}.
  \]
  Comparing this result with~\eqref{eq:DFgamma}, we obtain
  \[
    [p_\alpha] (2\DD - 2) (2\DD - 1) (2\DD) \guess = [p_\alpha] \frac{(1 - \gamma)^3}{1 - \eta}
  \]
  for $\alpha \vdash d$ and $d \geq 1$, and computing the constant term separately, the result follows immediately.
\end{proof}

\subsection{Inverting differential operators in the transformed variables}\label{sec:invert}

In order to use \autoref{thm:F3D} to evaluate $\Delta_1 \guess$, we need to invert the differential operators $2\DD - 2$, $2\DD - 1$, and $2\DD$. We will work with the transformed variables $\qq$, and thus introduce the additional differential operators
\[
  \D_k = p_k\diff{p_k}, \qquad \E_k = q_k\diff{q_k}, \qquad \EE = \sum_{k \geq 1} k q_k\diff{q_k}.
\]
As $\QQ$-linear operators, the operators $\D_1, \D_2, \ldots$ and $\DD$ have an eigenbasis given by the set $\{p_\alpha \colon \alpha \vdash d,\, d \geq 0\}$, and consequently they commute with each other. Similarly, the operators $\E_1, \E_2, \ldots$ and $\EE$ have the set $\{q_\alpha \colon \alpha \vdash d,\, d \geq 0\}$ as an eigenbasis and commute with each other. However, these two families of operators don't commute with each other. By using the relation~\eqref{eq:pqrel} to compute the action of $\E_k$ on $p_j$, we can verify the operator identity
\[
  \E_k = \D_k - \frac{2q_k}{1 - \gamma} \binom{2k}{k} \DD, \qquad k \geq 1.
\]
It follows that
\[
  \EE = \frac{1 - \eta}{1 - \gamma} \DD,
\]
and we can deduce the identity
\begin{equation}\label{eq:DtoE}
  \D_k = \E_k + \frac{2q_k}{1 - \eta} \binom{2k}{k} \EE, \qquad k \geq 1.
\end{equation}
Thus, we can express these differential operators for $\pp$ and $\qq$ in terms of each other. In the following result, we apply these expressions to invert the differential operators that appear in \autoref{thm:F3D}.

\begin{theorem}\label{thm:intF}
For $k \geq 1$, we have
\[
  \D_k \guess = \frac{1}{2k(2k - 1)} \binom{2k}{k} q_k - \sum_{j \geq 1} \frac{2j + 1}{2(j + k)(2k - 1)} \binom{2j}{j} \binom{2k}{k} q_j q_k.
\]
\end{theorem}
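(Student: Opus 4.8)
The plan is to verify the claimed formula by applying the third-order operator $(2\DD-2)(2\DD-1)(2\DD)$ to both sides. By \autoref{thm:F3D} this operator sends $\guess$ to $\frac{(1-\gamma)^3}{1-\eta}-1$, and since the operators $\D_1,\D_2,\dots$ and $\DD$ are simultaneously diagonal in the basis $\{p_\alpha\}$ they commute, so
\[
  (2\DD-2)(2\DD-1)(2\DD)\,\D_k\guess = \D_k\!\left(\frac{(1-\gamma)^3}{1-\eta}-1\right).
\]
Thus I reduce the theorem to computing $(2\DD-2)(2\DD-1)(2\DD)$ applied to the asserted right-hand side, computing $\D_k$ applied to $\frac{(1-\gamma)^3}{1-\eta}-1$, matching them, and then running a short argument about the kernel of $(2\DD-2)(2\DD-1)(2\DD)$.

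Write $G_k$ for the asserted value of $\D_k\guess$; it is a polynomial of degree two in $\qq$, so it can be handled entirely in the transformed variables using $\DD=\frac{1-\gamma}{1-\eta}\EE$ together with $\EE q_j=jq_j$, $\EE\gamma=\frac{\eta-\gamma}{2}$, and $\EE\eta=2\nu+\frac{\eta-\gamma}{2}$, where $\nu=\sum_j j^2\binom{2j}{j}q_j$. When $2\DD$ is applied to $G_k$, the factor $j+k$ produced by $\EE$ cancels the denominator $j+k$ appearing in $G_k$, after which $\sum_j(2j+1)\binom{2j}{j}q_j$ reassembles into $\eta$ and cancels a factor $1-\eta$; the upshot is $2\DD\,G_k=\binom{2k}{k}\frac{(1-\gamma)q_k}{2k-1}$. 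Applying $2\DD-1$ and then $2\DD-2$ in the same way — each step needing only $\EE q_k$, $\EE\gamma$ and, in the last step, $\EE\eta$ — the $(\gamma,\eta)$-brackets telescope, giving first $(2\DD-1)(2\DD)\,G_k=\binom{2k}{k}\frac{(1-\gamma)^2q_k}{1-\eta}$ and then a closed form for $(2\DD-2)(2\DD-1)(2\DD)\,G_k$ that is affine in $\nu$. On the other side I would evaluate $\D_k\bigl(\frac{(1-\gamma)^3}{1-\eta}-1\bigr)$ using the operator identity $\D_k=\E_k+\frac{2q_k}{1-\eta}\binom{2k}{k}\EE$ from \eqref{eq:DtoE}, with $\partial_{q_k}\gamma=\binom{2k}{k}$ and $\partial_{q_k}\eta=(2k+1)\binom{2k}{k}$; here the combination $-3(1-\eta)+(2k+1)(1-\gamma)-3(\eta-\gamma)$ collapses to $(2k-2)(1-\gamma)$, and the result is again affine in $\nu$. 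Comparing the two closed forms, the coefficients of $\nu$ are identical, while the difference of the $\nu$-free parts is $(1-\eta)\bigl[(1-\gamma)-(\eta-\gamma)-(1-\eta)\bigr]=0$; hence $(2\DD-2)(2\DD-1)(2\DD)\,\D_k\guess=(2\DD-2)(2\DD-1)(2\DD)\,G_k$.

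To conclude, observe that $(2\DD-2)(2\DD-1)(2\DD)$ multiplies the coefficient of $p_\alpha$ by $(2|\alpha|-2)(2|\alpha|-1)(2|\alpha|)$, which is nonzero precisely when $|\alpha|\ge 2$; so it remains to match the coefficients of $\D_k\guess$ and $G_k$ at $p_\alpha$ for $|\alpha|\le 1$. Both series have zero constant term, and for $\alpha=(m)$ one reads off $[p_m]\,\D_k\guess=\delta_{k,m}[p_m]\guess=\delta_{k,m}\frac{1}{2m(2m-1)}\binom{2m}{m}$ directly from \eqref{eq:Fseries} and the convention $(2m+1)^{\overline{-2}}=\bigl((2m-1)(2m)\bigr)^{-1}$, while $[p_m]\,G_k=\delta_{k,m}\frac{1}{2k(2k-1)}\binom{2k}{k}$ because $q_k$ equals $p_k$ plus terms of degree at least two in the $p$'s, and $q_jq_k$ contributes nothing in degree one. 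Therefore $\D_k\guess=G_k$.

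The step I expect to be the main obstacle is the bookkeeping in the two iterated operator computations, and in particular keeping track of the auxiliary series $\nu=\sum_j j^2\binom{2j}{j}q_j$: it is not expressible through $\gamma$ and $\eta$, it genuinely appears in intermediate expressions on both sides, and it has to be seen to cancel in the final comparison. What makes this manageable is the cascade of cancellations — the vanishing $j+k$ denominator after the first application of $\DD$ and the telescoping of the polynomial $(\gamma,\eta)$-brackets at each subsequent stage.
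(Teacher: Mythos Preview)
Your argument is correct. You verify the formula by applying $(2\DD-2)(2\DD-1)(2\DD)$ to both sides and then handling the kernel separately; the intermediate identities $2\DD\,G_k=\binom{2k}{k}\frac{(1-\gamma)q_k}{2k-1}$ and $(2\DD-1)(2\DD)\,G_k=\binom{2k}{k}\frac{(1-\gamma)^2 q_k}{1-\eta}$ are right, the $\nu$-terms on the two sides really are $\binom{2k}{k}q_k\cdot 4\nu(1-\gamma)^3/(1-\eta)^3$ and match, and the bracket $(1-\gamma)-(\eta-\gamma)-(1-\eta)$ does vanish. The kernel step is fine too, though note that you only need $|\alpha|\le 1$, i.e.\ the constant term and $[p_1]$; your general $[p_m]$ computation is more than required.

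The paper proceeds in the opposite direction. Rather than applying $(2\DD-2)(2\DD-1)(2\DD)$ forward to the conjectured $G_k$, it \emph{inverts} these operators one at a time starting from $\guess'''=\frac{(1-\gamma)^3}{1-\eta}-1$, using the conjugation identity
\[
  (1-\gamma)^i\,(2\EE-i)\bigl((1-\gamma)^{-i}G\bigr)=\frac{1-\eta}{1-\gamma}\,(2\DD-i)(G),
\]
which reduces each inversion to a trivial eigenvalue computation in the $q$-basis. This is what lets the paper avoid the auxiliary series $\nu$ entirely: at no stage does $1/(1-\eta)$ get differentiated. Amusingly, your forward computation and the paper's backward computation pass through the \emph{same} intermediate formulas $\D_k\guess'$ and $\D_k\guess''$; you are effectively running the paper's proof in reverse. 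The paper's route is constructive (it derives the formula rather than verifying it) and cleaner in that the bookkeeping you flag as the main obstacle simply does not arise; your route has the advantage of being entirely mechanical once the target formula is on the table.
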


\begin{proof}
  As notation local to this proof, let
  \[
    \guess''' = (2\DD - 2) (2\DD - 1) (2\DD) \guess, \quad
    \guess''  =            (2\DD - 1) (2\DD) \guess, \quad
    \guess'   =                       (2\DD) \guess.
  \]
  To prove the result, we use the operator identity
  \begin{equation}\label{eq:intDD}
      (1 - \gamma)^i (2\EE - i) \Big( (1 - \gamma)^{-i} G \Big) = \frac{1 - \eta}{1 - \gamma} (2\DD - i) (G),
  \end{equation}
  which holds for any integer $i$ and any formal power series $G$. This allows us to express the differential operators $2\DD - 2$, $2\DD - 1$, and $2\DD$ in terms of the operators $2\EE - 2$, $2\EE - 1$, and $2\EE$, which we can invert by recalling that they have $\{q_\alpha \colon \alpha \vdash d,\, d \geq 0\}$ as an eigenbasis.
  
  We proceed in a number of stages. First we invert $2\DD - 2$ by applying~\eqref{eq:intDD} with $i = 2$ to \autoref{thm:F3D}, obtaining
  \begin{align*}
    \guess''
      &= (2\DD - 2)^{-1} (\guess''') \\
      &= \frac{1}{2} + (2\DD - 2)^{-1} \left( \frac{(1 - \gamma)^3}{1 - \eta} \right) \\
      &= \tfrac{1}{2} + (1 - \gamma)^2 (2\EE - 2)^{-1} (1) \\
      &= \tfrac{1}{2} - \tfrac{1}{2} (1 - \gamma)^2,
  \end{align*}
  after checking separately that $[p_1] \guess'' = 2$. (We need to check this since the kernel of $2\DD - 2$ is spanned by $p_1$.)
  
  Next we apply $\D_k$ to $\guess''$ via~\eqref{eq:DtoE}. This is straightforward, and gives
  \[
    \D_k \guess'' = \frac{(1 - \gamma)^2}{1 - \eta} \binom{2k}{k} q_k.
  \]
  
  Now we invert $2\DD - 1$ by applying~\eqref{eq:intDD} with $i = 1$, which gives
  \begin{align*}
    \D_k \guess'
      &= (2\DD - 1)^{-1} (\D_k \guess'') \\
      &= (1 - \gamma) (2\EE - 1)^{-1} \left( \binom{2k}{k} q_k \right) \\
      &= (1 - \gamma) \frac{1}{2k - 1} \binom{2k}{k} q_k.
  \end{align*}
  
  Finally, we invert $2\DD$ by applying~\eqref{eq:intDD} with $i = 0$, giving
  \begin{align*}
    \D_k \guess
      &= (2\DD)^{-1} (\D_k \guess') \\
      &= (2\EE)^{-1} \left( (1 - \eta) \frac{1}{2k - 1} \binom{2k}{k} q_k \right) \\
      &= (2\EE)^{-1} \left( \frac{1}{2k - 1} \binom{2k}{k} q_k - \sum_{j \geq 1} \frac{2j + 1}{2k - 1} \binom{2j}{j} \binom{2k}{k} q_j q_k \right) \\
      &= \frac{1}{2k(2k - 1)} \binom{2k}{k} q_k - \sum_{j \geq 1} \frac{2j + 1}{2(j + k)(2k - 1)} \binom{2j}{j} \binom{2k}{k} q_j q_k.
  \end{align*}
  Again, the constant term needs to be checked separately, since the kernel of $(2\DD)$ consists of the constants, but clearly $\D_k \guess$ has no constant term.
\end{proof}

\subsection{The generating function for genus zero}\label{sec:genuszero}

In order to work consistently in the tranformed variables $\qq$, it will be useful to have descriptions of the projection and splitting operators in terms of $\qq$. When considering these operators, the change of variables from $\pp$ to $\qq$ also corresponds to a change of variables from $\xx$ to a new countable set of indeterminates $\yy = (y_1, y_2, \ldots)$, where we impose the relations
\begin{equation}\label{eq:xyrel}
  y_i = x_i (1 - \gamma)^{-2}.
\end{equation}
We can express the indeterminates $\pp$ and $\qq$ in terms of each other using~\eqref{eq:pqrel}, so we can identify the rings $\QQ[[\pp]]$ and $\QQ[[\qq]]$. Since $(1 - \gamma)^{-2}$ is an invertible element in this ring, we can further identify the rings $\QQ[[\pp, \xx]]$ and $\QQ[[\qq, \yy]]$ using~\eqref{eq:xyrel}. In this bigger ring, we have the operator identities
\begin{align*}
  \Pi_i &= [x_i^0] + \sum_{k \geq 1} p_k [x_i^k] = [y_i^0] + \sum_{k \geq 1} q_k [y_i^k], \\
  \Split_{i \to j} G(x_i) &= \frac{x_j G(x_i) - x_i G(x_j)}{x_i - x_j} + G(0) = \frac{y_j G(x_i) - y_i G(x_j)}{y_i - y_j} + G(0),
\end{align*}
so the projection and splitting operators are just as easy to use with either set of indeterminates.

\begin{remark}
  For completeness, note that the lifting operators can also be described in terms of $\qq$ and $\yy$, although the expressions are somewhat more complicated. That is, using~\eqref{eq:DtoE}, we obtain the expression
  \[
    \Delta_i = \sum_{k \geq 1} k x_i^k \diff{p_k} = \sum_{k \geq 1} \left( k y_i^k \diff{q_k} \right) + \frac{4y_i (1 - 4y_i)^{-\frac{3}{2}}}{(1 - \eta)} \sum_{k \geq 1} \left( k q_k \diff{q_k} + y_k \diff{y_k} \right).
  \]
\end{remark}

We are now able to evaluate $\Delta_1 \guess$ in the indeterminates $\qq$ and $\yy$.

\begin{corollary}\label{thm:Del_1F}
  We have
  \[
    \Delta_1 \guess = \Pi_2 \left( 1 - \sqrt{1 - 4y_1} - \frac{y_1}{2(y_1 - y_2)} \left( 1 - \sqrt{\frac{1 - 4y_1}{1 - 4y_2}} \right) \right).
  \]
\end{corollary}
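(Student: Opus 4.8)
The plan is to read off the formula from \autoref{thm:intF} by passing to the transformed variables $\qq$ and $\yy$.

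\emph{Step 1 (reduction to a computation in $\qq,\yy$).} I would first rewrite $\Delta_1$ so that the operator $\D_k = p_k\diff{p_k}$ appears. Since $\Delta_1 = \sum_{k\geq 1} k\,x_1^k\diff{p_k}$ and $\D_k\guess = p_k\diff[\guess]{p_k}$, we have $\Delta_1\guess = \sum_{k\geq 1} k\,x_1^k p_k^{-1}\D_k\guess$. Solving~\eqref{eq:pqrel} for $p_k$ gives $p_k = q_k(1-\gamma)^{2k}$, and~\eqref{eq:xyrel} gives $x_1^k = y_1^k(1-\gamma)^{2k}$, so the factors $(1-\gamma)^{2k}$ cancel and
\[
  \Delta_1\guess = \sum_{k\geq 1} k\,y_1^k\, q_k^{-1}\D_k\guess
\]
in $\QQ[[\qq,\yy]]$; the right-hand side makes sense because \autoref{thm:intF} exhibits $\D_k\guess$ as $q_k$ times a power series. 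Substituting \autoref{thm:intF} and simplifying yields
\[
  \Delta_1\guess = \sum_{k\geq 1}\frac{1}{2(2k-1)}\binom{2k}{k}y_1^k \;-\; \sum_{j,k\geq 1}\frac{k(2j+1)}{2(j+k)(2k-1)}\binom{2j}{j}\binom{2k}{k}y_1^k q_j.
\]

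\emph{Step 2 (extracting the operator $\Pi_2$).} By the classical identity $\sum_{k\geq 1}\frac{1}{2k-1}\binom{2k}{k}x^k = 1-\sqrt{1-4x}$, the first sum equals $\tfrac12\bigl(1-\sqrt{1-4y_1}\bigr)$. For the second, introduce the auxiliary series
\[
  \Phi = \sum_{j,k\geq 1}\frac{k(2j+1)}{2(j+k)(2k-1)}\binom{2j}{j}\binom{2k}{k}y_1^k y_2^j \;\in\; \QQ[[y_1,y_2]].
\]
Since $\Pi_2$ acts by $y_2^j\mapsto q_j$ for $j\geq 1$ and $\Phi$ has no $y_2^0$-term, the second sum is exactly $\Pi_2\Phi$; and since $\Pi_2$ is linear and fixes every series free of $y_2$, we get $\Delta_1\guess = \Pi_2\bigl(\tfrac12(1-\sqrt{1-4y_1}) - \Phi\bigr)$. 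Thus the corollary is equivalent to the closed-form evaluation
\[
  \Phi = \frac{y_1}{2(y_1-y_2)}\Bigl(1-\sqrt{\frac{1-4y_1}{1-4y_2}}\,\Bigr) - \frac{1-\sqrt{1-4y_1}}{2}.
\]

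\emph{Step 3 (evaluating $\Phi$).} I would prove this using the integral representation $\frac{1}{j+k} = \int_0^1 t^{j+k-1}\,dt$, applied coefficientwise. Combined with the generating functions $\sum_{k\geq 1}\frac{k}{2k-1}\binom{2k}{k}x^k = \frac{2x}{\sqrt{1-4x}}$ and $\sum_{j\geq 1}(2j+1)\binom{2j}{j}x^j = (1-4x)^{-3/2}-1$, the double sum decouples and one obtains
\[
  \Phi = \int_0^1 \frac{y_1}{\sqrt{1-4ty_1}}\Bigl((1-4ty_2)^{-3/2}-1\Bigr)\,dt,
\]
an identity of formal power series (the coefficient of each monomial $y_1^k y_2^j$ in the integrand is a polynomial in $t$). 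The summand $-1$ contributes $-\tfrac12\bigl(1-\sqrt{1-4y_1}\bigr)$, and the summand $(1-4ty_2)^{-3/2}$ is integrated using the antiderivative $\frac{d}{dt}\frac{\sqrt{1-4ty_1}}{\sqrt{1-4ty_2}} = \frac{2(y_2-y_1)}{\sqrt{1-4ty_1}\,(1-4ty_2)^{3/2}}$, giving $\frac{y_1}{2(y_1-y_2)}\bigl(1-\sqrt{(1-4y_1)/(1-4y_2)}\bigr)$ — a genuine power series, the apparent pole along $y_1 = y_2$ being removable. Adding the two contributions yields the displayed $\Phi$, and applying $\Pi_2$ completes the proof.

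\emph{Main obstacle.} The crux is the closed-form evaluation of the double sum for $\Phi$ in Step 3; the decisive device is $\frac{1}{j+k} = \int_0^1 t^{j+k-1}\,dt$, which separates the sum into a product of two one-variable series and reduces everything to a single elementary integral, once one spots the antiderivative $\sqrt{1-4ty_1}/\sqrt{1-4ty_2}$. (Alternatively, one can characterize both $\Phi$ and the claimed closed form as the unique constant-term-free preimages of $\frac{y_1}{\sqrt{1-4y_1}}\bigl((1-4y_2)^{-3/2}-1\bigr)$ under the Euler operator $y_1\diff{y_1}+y_2\diff{y_2}$, at the cost of a longer differentiation.) The only other point needing attention is the bookkeeping in Step 1: tracking the powers of $1-\gamma$ coming from~\eqref{eq:pqrel} and~\eqref{eq:xyrel} — which cancel identically — and noting that $q_k^{-1}\D_k\guess$ is a legitimate power series by \autoref{thm:intF}.
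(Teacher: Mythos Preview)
Your proof is correct and follows essentially the same approach as the paper's: both reduce to the double sum via \autoref{thm:intF} and the change of variables, then evaluate it using the integral representation $\tfrac{1}{j+k}=\int_0^1 t^{j+k-1}\,dt$ together with the same antiderivative $\sqrt{1-4ty_1}/\sqrt{1-4ty_2}$. The only cosmetic difference is in packaging: the paper includes the $j=0$ term in its auxiliary series $G(y_1,y_2)$ and writes $\Delta_1\guess=\Pi_2\bigl(2G(y_1,0)-G(y_1,y_2)\bigr)$, whereas you keep the single sum separate and work with $\Phi$ restricted to $j\geq 1$; this leads you to split the integral into two pieces rather than evaluating $G$ in one stroke, but the computations are otherwise identical.
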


\begin{proof}
  From \autoref{thm:intF}, we have
  \begin{align*}
    \Delta_1 \guess
      &= \sum_{k \geq 1} \frac{k y_1^k}{q_k} \D_k \guess \\
      &= \sum_{k \geq 1} \frac{1}{2(2k - 1)} \binom{2k}{k} y_1^k - \sum_{j,k \geq 1} \frac{(2j + 1) k}{2(j + k)(2k - 1)} \binom{2j}{j} \binom{2k}{k} y_1^k q_j \\
      &= \Pi_2\left( 2 G(y_1, 0) - G(y_1, y_2) \right),
  \end{align*}
  where the power series $G(y_1, y_2)$ is defined by
  \[
    G(y_1, y_2) = \sum_{j \geq 0} \sum_{k \geq 1} \frac{(2j + 1)k}{2(j + k)(2k - 1)} \binom{2j}{j} \binom{2k}{k} y_1^k y_2^j.
  \]
  Then, the computation
  \begin{align*}
    G(y_1, y_2)
      &= \int\limits_0^1 y_1 t (1 - 4y_1 t)^{-\frac{1}{2}} (1 - 4y_2 t)^{-\frac{3}{2}} \frac{\mathrm{d}t}{t} \\
      &= \left[ \frac{-y_1}{2(y_1 - y_2)} (1 - 4y_1 t)^{\frac{1}{2}} (1 - 4y_2 t)^{-\frac{1}{2}} \right]_{t = 0}^1 \\
      &= \frac{y_1}{2(y_1 - y_2)} \left( 1 - \sqrt{\frac{1 - 4y_1}{1 - 4y_2}} \right) \\
  \end{align*}
  completes the proof.
\end{proof}

In the following result, using the above explicit expression for $\Delta_1 \guess$, we uniquely identify $\guess$ as the generating function for monotone single Hurwitz numbers in genus zero.

\begin{theorem}\label{thm:idenguess}
  The series $\guess$ satisfies the genus zero monotone join-cut equation
  \[
    \Delta_1 \guess = \Pi_2 \Split_{1 \to 2} \Delta_1 \guess + (\Delta_1 \guess)^2 + x_1.
  \]
  Thus, $\guess = \Mon_0$ is the generating function for monotone single Hurwitz numbers in genus zero.
\end{theorem}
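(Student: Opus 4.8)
The plan is to reduce, via \autoref{thm:pdeunique}, to a verification: part~(i) of that theorem says $\Delta_1\Mon_0$ is the unique formal power series solution of~\eqref{eq:operator-genus-zero} with $[p_0x_1^0]\Delta_1\Mon_0=0$, and part~(iii) says $\Mon_0$ is then recovered from $\Delta_1\Mon_0$ together with $[p_0]\Mon_0=0$. So it suffices to check that $\Delta_1\guess$ solves~\eqref{eq:operator-genus-zero}, that $[p_0x_1^0]\Delta_1\guess=0$, and that $[p_0]\guess=0$. The last two are immediate from~\eqref{eq:Fseries}: the sum defining $\guess$ has no constant term, and $\Delta_1=\sum_k kx_1^k\,\partial/\partial p_k$ annihilates the $x_1^0$-part of every series. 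Once the join-cut equation is verified, $\guess=\Mon_0$ follows.

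For the verification I will substitute the closed form of \autoref{thm:Del_1F} and work in the transformed variables $\qq,\yy$, setting $r_i:=\sqrt{1-4y_i}$, so $y_i=(1-r_i^2)/4$ and $1/r_i=\sum_{k\ge0}\binom{2k}{k}y_i^k$. The computation rests on the projection evaluations $\Pi_i(1/r_i)=1+\gamma$ and $\Pi_i(1/r_i^3)=1+\eta$ (the latter using $\sum_{k\ge0}(2k+1)\binom{2k}{k}y^k=(1-4y)^{-3/2}$), hence $\Pi_i(2-1/r_i)=1-\gamma$, together with the rationalizations $\tfrac1{r_i+r_j}=\tfrac{r_j-r_i}{4(y_i-y_j)}$, $\tfrac{r_j-r_i}{y_i-y_j}=\tfrac4{r_i+r_j}$, and $\tfrac1{r_i-r_j}=\tfrac{r_i+r_j}{4(y_j-y_i)}$, which allow $\Pi_i$ to be evaluated on every rational function of the $r$'s that occurs. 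In these variables the formula of \autoref{thm:Del_1F} simplifies to $\Delta_1\guess=\Pi_2\bigl((1-r_1)-\tfrac{1-r_1^2}{2r_2(r_1+r_2)}\bigr)$.

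Next I assemble the three terms on the right of~\eqref{eq:operator-genus-zero}. The inhomogeneous term is $x_1=y_1(1-\gamma)^2$ by~\eqref{eq:xyrel}. For the other two I bring in a fresh dummy $y_3$ and write $\Delta_1\guess=\Pi_3 V(y_1,y_3)$ with $V(y_1,y_3)=(1-r_1)-\tfrac{1-r_1^2}{2r_3(r_1+r_3)}$; then $(\Delta_1\guess)^2=\Pi_2\Pi_3\bigl[V(y_1,y_2)V(y_1,y_3)\bigr]$ and, from the $\yy$-description of $\Split_{1\to2}$ together with $V(0,y_3)=0$, $\Pi_2\Split_{1\to2}\Delta_1\guess=\Pi_2\Pi_3\bigl[\tfrac{y_2V(y_1,y_3)-y_1V(y_2,y_3)}{y_1-y_2}\bigr]$. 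The inner fraction collapses under the rationalization identities: its $y_3$-free part becomes $\tfrac{(1-r_1)(1-r_2)}{r_1+r_2}$ and the remainder a single rational function in $r_1,r_2,r_3$, which is then handled by $\Pi_3$ and $\Pi_2$ via the identities above plus partial fractions in $r_3$ (for instance $\tfrac1{r_3(r_1+r_3)(r_2+r_3)}=\tfrac1{r_1r_2r_3}+\tfrac1{r_1(r_1-r_2)(r_1+r_3)}+\tfrac1{r_2(r_2-r_1)(r_2+r_3)}$). Substituting all three terms reduces~\eqref{eq:operator-genus-zero} to an algebraic identity among $r_1$, $r_2$, $\gamma$, $\eta$ and the projection images $\Pi_i\bigl(1/(r_i+r_j)\bigr)$; a convenient way to finish is to keep $y_2$ symbolic and verify the identity in $\QQ[[\qq,y_1,y_2]]$ before the outermost projection, clearing denominators.

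I expect the main obstacle to be this middle computation — evaluating the nested projections $\Pi_2\Pi_3$ of the multi-variable rational functions arising from the square and the splitting term, and organizing the output so that the cancellations against $\Pi_2\bigl((1-r_1)-\tfrac{1-r_1^2}{2r_2(r_1+r_2)}\bigr)$ and against $y_1(1-\gamma)^2$ become visible; correctly tracking which copy of the dummy variable $\Split_{1\to2}$ acts on, versus the squaring, is the other delicate point. As a fallback one can instead extract the coefficient of $q_\alpha y_1^k$ from~\eqref{eq:operator-genus-zero} using the fully explicit formula for $\D_k\guess$ in \autoref{thm:intF}, turning the theorem into a family of binomial-coefficient identities, but the transformed-variable route above is cleaner.
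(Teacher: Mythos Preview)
Your setup is correct and matches the paper almost exactly: reduce to verifying \eqref{eq:operator-genus-zero} via \autoref{thm:pdeunique}, substitute the closed form from \autoref{thm:Del_1F}, introduce a fresh dummy $y_3$, and collect all four terms under $\Pi_2\Pi_3$ applied to an explicit rational function $B(y_1,y_2,y_3)$ in the square roots $r_i=\sqrt{1-4y_i}$. Your simplification $A(y_1,y_2)=(1-r_1)-\tfrac{1-r_1^2}{2r_2(r_1+r_2)}$ is correct, as is the observation $V(0,y_3)=0$ needed for the splitting term.

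The gap is in your finishing move. You propose to evaluate $\Pi_3$ explicitly (via partial fractions and the identities $\Pi_3(1/r_3)=1+\gamma$, $\Pi_3(1/r_3^3)=1+\eta$) and then ``keep $y_2$ symbolic and verify the identity in $\QQ[[\qq,y_1,y_2]]$ before the outermost projection.'' This is unlikely to close. First, your own partial-fraction expansion produces terms $\Pi_3\bigl(1/(r_1+r_3)\bigr)$ and $\Pi_3\bigl(1/(r_2+r_3)\bigr)$, which have no closed form comparable to $\gamma$ or $\eta$; you would be left with an ``identity'' involving these transcendental quantities and no relations to impose on them. Second, and more fundamentally, the integrand $B(y_1,y_2,y_3)$ is \emph{not} zero, and there is no reason to expect $\Pi_3 B$ to vanish as an element of $\QQ[[\qq,y_1,y_2]]$ either. (A toy example: $B=y_2-y_3$ has $\Pi_2\Pi_3 B=0$ but $\Pi_3 B=y_2-q_1\neq 0$.) So verifying before the last projection is not a valid shortcut here.

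The paper's finish avoids evaluating any projection at all. Since $\Pi_2\Pi_3$ is symmetric under swapping the dummy indices $2$ and $3$, one has
\[
\Pi_2\Pi_3\,B(y_1,y_2,y_3)=\Pi_2\Pi_3\Bigl(\tfrac12 B(y_1,y_2,y_3)+\tfrac12 B(y_1,y_3,y_2)\Bigr),
\]
and a direct algebraic computation (purely in the $r_i$'s, no $\qq$-series involved) shows that the symmetrized integrand $\tfrac12 B(y_1,y_2,y_3)+\tfrac12 B(y_1,y_3,y_2)$ is identically zero. That single observation replaces all of the projection bookkeeping you outline. Your fallback via \autoref{thm:intF} and coefficient extraction would work in principle, but the symmetrization is both shorter and the actual content of the paper's argument.
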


\begin{proof}
\autoref{thm:Del_1F} gives $\Delta_1 \guess = \Pi_2 A(y_1,y_2)$, where
  \[
    A(y_1, y_2) = 1 - \sqrt{1 - 4y_1} - \frac{y_1}{2(y_1 - y_2)} \left( 1 - \sqrt{\frac{1 - 4y_1}{1 - 4y_2}} \right).
  \]
  We need to check that the expression
  \[
    \Delta_1 \guess - \Pi_2 \Split_{1 \to 2} \Delta_1 \guess - (\Delta_1 \guess)^2 - x_1
  \]
  is zero. To do so, we rewrite each of the terms in this expression as
  \begin{align*}
    \Delta_1 \guess &= \Pi_2 \Pi_3 \Big( A(y_1, y_2) \Big), \\
    \Pi_2 \Split_{1 \to 2} \Delta_1 \guess &= \Pi_2 \Pi_3 \left( \frac{y_2 A(y_1, y_3) - y_1 A(y_2, y_3)}{y_1 - y_2} \right), \\
    (\Delta_1 \guess)^2 &= \Pi_2 \Pi_3 \Big( A(y_1, y_2) A(y_1, y_3) \Big), \\
    x_1 = y_1 (1 - \gamma)^2 &= \Pi_2 \Pi_3 \left( y_1 \left( 2 - \frac{1}{\sqrt{1 - 4y_2}} \right) \left( 2 - \frac{1}{\sqrt{1 - 4y_3}} \right) \right)
  \end{align*}
  to get an expression of the form
  \[
    \Pi_2 \Pi_3 \, B(y_1, y_2, y_3).
  \]
  The series $B(y_1, y_2, y_3)$ itself is not zero, but a straightforward computation shows that the series
  \[
    \tfrac{1}{2} B(y_1, y_2, y_3) + \tfrac{1}{2} B(y_1, y_3, y_2),
  \]
  obtained by symmetrizing with respect to $y_2$ and $y_3$, is zero. Thus we have
  \[
    \Pi_2 \Pi_3 \, B(y_1, y_2, y_3) = \Pi_2 \Pi_3 \big( \tfrac{1}{2} B(y_1, y_2, y_3) + \tfrac{1}{2} B(y_1, y_3, y_2) \big) = 0,
  \]
 which completes the verification. The fact that $\guess = \Mon_0$ follows immediately from \autoref{thm:pdeunique}.
\end{proof}

Finally, we are now able to deduce our main result.

\begin{proof}[Proof of \autoref{thm:gzformula}]
  In view of~\eqref{eq:defMon_g} with $g = 0$, applying \autoref{thm:idenguess} gives
  \[
    \mon_0(\alpha) = d\,! [p_{\alpha}] \guess
  \]
  for any partition $\alpha$ of $d \geq 1$. The result follows immediately from~\eqref{eq:Fseries}.
\end{proof}

\proof[Acknowledgements]

It is a pleasure to acknowledge helpful conversations with our colleagues Sean Carrell and David Jackson, Waterloo, and Ravi Vakil, Stanford. J.~N.~would like to acknowledge email correspondence with Mike Roth, Queen's. The extensive numerical computations required in this project were performed using Sage~\cite{sage}, and its algebraic combinatorics features developed by the Sage-Combinat community~\cite{sage-combinat}.

\bibliographystyle{myplain}
\bibliography{monotone}

\providecommand{\MR}{\relax\ifhmode\unskip\space\fi MR }
\providecommand{\MRhref}[2]{%
  \href{http://www.ams.org/mathscinet-getitem?mr=#1}{#2}
}
\providecommand{\href}[2]{#2}
\begin{thebibliography}{10}

\bibitem{bouchard-marino08}
Vincent Bouchard and Marcos Mari{\~n}o, \emph{Hurwitz numbers, matrix models
  and enumerative geometry}, From {H}odge theory to integrability and {TQFT}
  tt*-geometry, Proc. Sympos. Pure Math., vol.~78, Amer. Math. Soc.,
  Providence, RI, 2008, pp.~263--283.

\bibitem{bousquet-melou-schaeffer00}
Mireille Bousquet-M{\'e}lou and Gilles Schaeffer, \emph{Enumeration of planar
  constellations}, Adv. in Appl. Math. \textbf{24} (2000), no.~4, 337--368,
  ISSN 0196-8858, URL \url{http://dx.doi.org/10.1006/aama.1999.0673}.

\bibitem{ekedahl-lando-shapiro-vainshtein01}
Torsten Ekedahl, Sergei Lando, Michael Shapiro, and Alek Vainshtein,
  \emph{Hurwitz numbers and intersections on moduli spaces of curves}, Invent.
  Math. \textbf{146} (2001), no.~2, 297--327, ISSN 0020-9910, URL
  \url{http://dx.doi.org/10.1007/s002220100164}.

\bibitem{eynard-mulase-safnuk09}
B.~Eynard, M.~Mulase, and B.~Safnuk, \emph{The {L}aplace transform of the
  cut-and-join equation and the {B}ouchard-{M}arino conjecture on {H}urwitz
  numbers}, Arxiv preprint arXiv:0907.5224 (2009), URL
  \url{http://arxiv.org/abs/0907.5224}.

\bibitem{eynard-orantin07}
B.~Eynard and N.~Orantin, \emph{Invariants of algebraic curves and topological
  expansion}, Commun. Number Theory Phys. \textbf{1} (2007), no.~2, 347--452,
  ISSN 1931-4523.

\bibitem{feray11}
Valentin Feray, \emph{On complete functions in {J}ucys-{M}urphy elements},
  Arxiv preprint arXiv:1009.0144 (2011), URL
  \url{http://arxiv.org/abs/1009.0144}, to appear in {A}nnals of
  {C}ombinatorics.

\bibitem{gewurz-merola06}
Daniele~A. Gewurz and Francesca Merola, \emph{Some factorisations counted by
  {C}atalan numbers}, European J. Combin. \textbf{27} (2006), no.~6, 990--994,
  ISSN 0195-6698, URL \url{http://dx.doi.org/10.1016/j.ejc.2005.04.004}.

\bibitem{goulden-guay-paquet-novak12b}
I.~P. Goulden, Mathieu Guay-Paquet, and Jonathan Novak, \emph{Polynomiality of
  monotone {H}urwitz numbers in higher genera}, in preparation.

\bibitem{goulden-guay-paquet-novak11a}
I.~P. Goulden, Mathieu Guay-Paquet, and Jonathan Novak, \emph{Monotone
  {H}urwitz numbers and the {HCIZ} integral {I}}, Arxiv preprint
  arXiv:1107.1015 (2011), URL \url{http://arxiv.org/abs/1107.1015}.

\bibitem{goulden-jackson97}
I.~P. Goulden and D.~M. Jackson, \emph{Transitive factorisations into
  transpositions and holomorphic mappings on the sphere}, Proc. Amer. Math.
  Soc. \textbf{125} (1997), no.~1, 51--60, ISSN 0002-9939, URL
  \url{http://dx.doi.org/10.1090/S0002-9939-97-03880-X}.

\bibitem{goulden-jackson-vainshtein00}
I.~P. Goulden, D.~M. Jackson, and A.~Vainshtein, \emph{The number of ramified
  coverings of the sphere by the torus and surfaces of higher genera}, Ann.
  Comb. \textbf{4} (2000), no.~1, 27--46, ISSN 0218-0006, URL
  \url{http://dx.doi.org/10.1007/PL00001274}.

\bibitem{goulden-jackson04}
I.~P. Goulden and David~M. Jackson, \emph{Combinatorial enumeration}, Dover
  Publications Inc., Mineola, NY, 2004, ISBN 0-486-43597-0, With a foreword by
  Gian-Carlo Rota, Reprint of the 1983 original.

\bibitem{harish-chandra57}
Harish-Chandra, \emph{Differential operators on a semisimple {L}ie algebra},
  Amer. J. Math. \textbf{79} (1957), 87--120, ISSN 0002-9327.

\bibitem{hurwitz91}
A.~Hurwitz, \emph{{\"U}ber {R}iemann'sche {F}l{\"a}chen mit gegebenen
  {V}erzweigungspunkten}, Mathematische Annalen \textbf{39} (1891), no.~1,
  1--60.

\bibitem{itzykson-zuber80}
C.~Itzykson and J.~B. Zuber, \emph{The planar approximation. {II}}, J. Math.
  Phys. \textbf{21} (1980), no.~3, 411--421, ISSN 0022-2488, URL
  \url{http://dx.doi.org/10.1063/1.524438}.

\bibitem{jucys74}
A.-A.~A. Jucys, \emph{Symmetric polynomials and the center of the symmetric
  group ring}, Rep. Mathematical Phys. \textbf{5} (1974), no.~1, 107--112, ISSN
  0034-4877.

\bibitem{kazarian-lando07}
M.~E. Kazarian and S.~K. Lando, \emph{An algebro-geometric proof of {W}itten's
  conjecture}, J. Amer. Math. Soc. \textbf{20} (2007), no.~4, 1079--1089, ISSN
  0894-0347, URL \url{http://dx.doi.org/10.1090/S0894-0347-07-00566-8}.

\bibitem{kontsevich92}
Maxim Kontsevich, \emph{Intersection theory on the moduli space of curves and
  the matrix {A}iry function}, Comm. Math. Phys. \textbf{147} (1992), no.~1,
  1--23, ISSN 0010-3616, URL
  \url{http://projecteuclid.org/getRecord?id=euclid.cmp/1104250524}.

\bibitem{lando-zvonkin04}
Sergei~K. Lando and Alexander~K. Zvonkin, \emph{Graphs on surfaces and their
  applications}, Encyclopaedia of Mathematical Sciences, vol. 141,
  Springer-Verlag, Berlin, 2004, ISBN 3-540-00203-0, With an appendix by Don B.
  Zagier, Low-Dimensional Topology, II.

\bibitem{lassalle10}
Michel Lassalle, \emph{Class expansion of some symmetric functions in
  {J}ucys-{M}urphy elements}, Arxiv preprint arXiv:1005.2346 (2010), URL
  \url{http://arxiv.org/abs/1005.2346}.

\bibitem{macdonald95}
I.~G. Macdonald, \emph{Symmetric functions and {H}all polynomials}, second ed.,
  Oxford Mathematical Monographs, The Clarendon Press Oxford University Press,
  New York, 1995, ISBN 0-19-853489-2, With contributions by A. Zelevinsky,
  Oxford Science Publications.

\bibitem{okounkov-pandharipande09}
A.~Okounkov and R.~Pandharipande, \emph{Gromov-{W}itten theory, {H}urwitz
  numbers, and matrix models}, Algebraic geometry---{S}eattle 2005. {P}art 1,
  Proc. Sympos. Pure Math., vol.~80, Amer. Math. Soc., Providence, RI, 2009,
  pp.~325--414.

\bibitem{okounkov-vershik96}
Andrei Okounkov and Anatoly Vershik, \emph{A new approach to representation
  theory of symmetric groups}, Selecta Math. (N.S.) \textbf{2} (1996), no.~4,
  581--605, ISSN 1022-1824, URL \url{http://dx.doi.org/10.1007/PL00001384}.

\bibitem{sage-combinat}
The {S}age-{C}ombinat community, \emph{{S}age-{C}ombinat: enhancing {S}age as a
  toolbox for computer exploration in algebraic combinatorics}, 2008, {\tt
  http://combinat.sagemath.org}.

\bibitem{sage}
W.~A. Stein et~al., \emph{Sage mathematics software (version 4.6)}, 2010, URL
  \url{http://www.sagemath.org}.

\bibitem{strehl96}
Volker Strehl, \emph{Minimal transitive products of transpositions---the
  reconstruction of a proof of {A}.\ {H}urwitz}, S\'em. Lothar. Combin.
  \textbf{37} (1996), Art.\ S37c, 12 pp. (electronic), ISSN 1286-4889.

\bibitem{witten91}
Edward Witten, \emph{Two-dimensional gravity and intersection theory on moduli
  space}, Surveys in differential geometry ({C}ambridge, {MA}, 1990), Lehigh
  Univ., Bethlehem, PA, 1991, pp.~243--310.

\bibitem{zinn-justin02}
P.~Zinn-Justin, \emph{H{CIZ} integral and 2{D} {T}oda lattice hierarchy},
  Nuclear Phys. B \textbf{634} (2002), no.~3, 417--432, ISSN 0550-3213, URL
  \url{http://dx.doi.org/10.1016/S0550-3213(02)00374-7}.

\bibitem{zinn-justin-zuber03}
P.~Zinn-Justin and J.-B. Zuber, \emph{On some integrals over the {${\rm U}(N)$}
  unitary group and their large {$N$} limit}, J. Phys. A \textbf{36} (2003),
  no.~12, 3173--3193, ISSN 0305-4470, URL
  \url{http://dx.doi.org/10.1088/0305-4470/36/12/318}, Random matrix theory.

\bibitem{zvonkin97}
A.~Zvonkin, \emph{Matrix integrals and map enumeration: an accessible
  introduction}, Math. Comput. Modelling \textbf{26} (1997), no.~8-10,
  281--304, ISSN 0895-7177, URL
  \url{http://dx.doi.org/10.1016/S0895-7177(97)00210-0}, Combinatorics and
  physics (Marseilles, 1995).

\end{thebibliography}
\end{document}